\newcommand{\R}{\mathbb{R}}
\newcommand{\Z}{\mathbb{Z}}
\newcommand{\N}{\mathbb{N}}
\newcommand{\e}{\varepsilon}
\newcommand{\abs}[1]{\left|#1\right|}
\newcommand\blfootnote[1]{%
  \begingroup
  \renewcommand\thefootnote{}\footnote{#1}%
  \addtocounter{footnote}{-1}%
  \endgroup
}
\declaretheoremstyle[name=Theorem,]{thmsty}
\declaretheorem[style=thmsty,numberwithin=section]{theorem}
\declaretheoremstyle[name=Corollary,]{thmsty}
\declaretheoremstyle[name=Proposition,]{prosty}
\declaretheorem[style=prosty,numberlike=theorem]{Proposition}
\declaretheoremstyle[name=Lemma,]{prcpsty}
\declaretheorem[style=prcpsty,numberlike=theorem]{Lemma}
\declaretheoremstyle[name=Definition,]{prcpsty}
\declaretheorem[style=prcpsty,numberlike=theorem]{defn}
\declaretheoremstyle[name=Remark,]{prcpsty}
\declaretheorem[style=prcpsty,numberlike=theorem]{Remark}
\begin{document}
\author{Andrea Braides, Edoardo Voglino and Matteo Zanardini \\
\small SISSA, via Bonomea 265, Trieste, Italy,  {\tt \{abraides,evoglino,mzanardi\}@sissa.it}
}
\title{Microstructures and anti-phase boundaries \\
 in long-range lattice systems}
\date{}

\maketitle
\begin{abstract}
      We study the effect of long-range interactions in non-convex one-dimensional lattice systems in the simplified yet meaningful assumption that the relevant long-range interactions are between $M$-neighbours for some $M\ge 2$ and are convex. If short-range interactions are non-convex we then have a competition between short-range oscillations and long-range ordering.  In the case of a double-well nearest-neighbour potential, thanks to a recent result by Braides, Causin, Solci and Truskinovsky, we are able to show that such a competition generates $M$-periodic minimizers whose arrangements are driven by an interfacial energy. Given $M$, the shape of such minimizers is universal, and independent of the details of the energies, but the number and shapes of such minimizers increases as $M$ diverges.   

    {\bf Keywords:} lattice systems, long-range interactions, non-convex energies, discrete-to-continuum, interfaces, Gamma-convergence.

    {\bf MSC Class (2020):} 39A12, 34K31, 82B20, 49J45, 46E39.
\end{abstract}

\blfootnote{Preprint SISSA  08/2024/MATE}

\section{Introduction}
In this paper we study the behaviour of boundary-value minimum problems related to one-dimensional long-range lattice energies, which in the greatest generality can be stated as the asymptotic behaviour as $n\to+\infty$ of solutions $u^n=\{u^n_i\}_i$ of the minimization of functionals of the form 
\begin{equation}\label{eq=0-0}
E_n(u)=\sum_{k=1}^n\sum_{i=k}^n \psi_k(u_i-u_{i-k})
\end{equation}
on $n+1$-tuples $u=\{u_i\}_i$ of parameters with $u_i\in\mathbb R$, subjected to boundary conditions $u_0=0$  and $u_n=n\ell$. In this generality, the problem is very complex and leads to a variety of different issues with competing short and long-range oscillations and concentration effect, except for the trivial case when all $\psi_k$ are convex, for which the minimizer is essentially unique and coincides with the linear function $u_i= i\ell$, except for boundary effects, which are asymptotically negligible upon some technical assumptions on $\psi_k$. Nevertheless, an averaged description of minimizers is possible in the spirit of  $\Gamma$-convergence. To this end, we regard the energies $E_n$ as defined on functions $u\colon[0,1]\to\mathbb R$, with domain the piecewise-affine functions defined by $u(x)= \frac1n u_{\lfloor x/n\rfloor}$ for some $(n+1)$-tuple $\{u_i\}_i$, in such a way that the derivative of $u$ on $((i-1)/n, i/n)$ is $u_i-u_{i-1}$. The $\Gamma$-convergence of such energies can then be studied with respect to the $L^1$-convergence. Upon some growth hypotheses on $\psi_k$ that ensure that limits of sequences $u_n$ with energy of order $n$ belong to some $W^{1,p}(0,1)$, the $\Gamma$-limit of $\frac1n E_n$ can be written as
\begin{equation}\label{eq=1-0}
F(u)=\int_0^1 \psi_{\rm hom} (u')dt,
\end{equation}
for a convex function $\psi_{\rm hom}$ resulting from a non-linear homogenization process (see \cite{BGS}, and \cite{AC} for the higher-dimensional case).  

The $\Gamma$-convergence above only ensures that the (interpolations of the) minimizers of boundary-value problems for $E_n$ converge to the corresponding minimizer, or to one of the minimizers, of the analog continuum boundary-value problem related to $F$, but brings no further information on their behaviour, which may depend on $\ell$.  Note that if $\psi_{\rm hom}$ is strictly convex at  $\ell$ then the unique minimizer is the linear function $u(x)=\ell x$, while at non-strictly convex points discrete solutions may converge to  a particular choice among minimizers.

A particular class of energies for which an analysis of $\psi_{\rm hom}$ leading to a description of the convergence of discrete minimizers has been possible is the one studied in \cite{BCST}, where the non-convexity is confined to nearest-neighbour interactions governed by $\psi=\psi_1$, and the long-range potentials are quadratic; that is, $\psi_k(z)= a_k z^2$, with $a_k$ non-negative, and the energies can be written as 
\begin{equation}\label{eq=1}
E_n(u)=\sum_{i=1}^n \psi(u_i-u_{i-1})+\sum_{k=2}^n\sum_{i=k}^n a_k(u_i-u_{i-k})^2
\end{equation}
In that case, the properties of minimizers can be linked to properties of the sequence $a_k$. In particular, in \cite{BCST} the case of double-well $\psi$ is studied, for which a prototype is
\begin{equation}\label{eq=0}
\psi(z)=\min \{(z-1)^2,(z+1)^2\},
\end{equation}
in which case it is possible to describe the patterns of the minimizers by tracing whether the 
value $z_i=u_i- u_{i-1}$ lies in one ``well'' (i.e., $z_i\le0$) or the other one (i.e., $z_i\ge0$).
As such patterns of minimizers are concerned, we recall the  following interesting  characterization of minimizers of problems \eqref{eq=1} in the case when $a_k=0$ for all $k\ge 2$ except for one value $k=M$: either 

\begin{itemize}
\item[(a)] minimizers $u_i$ are such that $z_i=u_i- u_{i-1}$ tend (for $n$ large) to be $M$-periodic with average $z$, and take only two values, one, for $m$ indices in the period, in one well and the second one, for $M-m$ indices in the period, in the second well, or 

\item[(b)] $z_i$ defined as above tend (for $n$ large) to be a mixture of two periodic functions as in (1) with some $z'$ and $z''$ in the place of $z$ and for two consecutive values $m$ and $m+1$ between $0$ and $M$. 
\end{itemize}

\noindent This characterization extends a formula known before when only $a_2\neq 0$ (see \cite{BGS}), in which case we have the only three possibilities that either 
the parameters $z_i$ tend to take a constant value in the first or in the second well, or that we have a $2$-periodic pattern mixing values in both wells.

The appearance of microstructure is a recurring feature of non-convex variational systems. Such microstructures may be driven by a scale-free relaxation phenomenon described by convexification or quasiconvexification of the original energy densities (see e.g.~the books by Buttazzo \cite{Buttazzo} or Dacorogna \cite{Dacorogna}), or present more regular patterns at a specific scale due to competing long-range and short-range effects (as in the seminal paper by S.~M\"uller \cite{SM}; see also \cite{AM}). Minimizers of $E_n$ are similar to the latter, with oscillations both driven by short- and long-range microscopic interactions, and by mesoscopic non-convexity. 

Simple examples of variational problems exhibiting microscopic oscillations are lattice systems defined on ``spin functions''; i.e., functions taking only a finite number of values, the traditional choice being $-1$ and $1$. If the energies are ``frustrated''; that is, the system presents interaction potentials that cannot be all separately minimized at the same time by a single function, then minimization may produce periodically modulated phases (see \cite[Chapter 7]{ABCS}). Often, the determination of the period and shape of such minimizers is a non-trivial matter as in the case of infinite-range antiferromagnetic systems studied by Giuliani et al.~\cite{GLL}, and has interesting continuum counterparts (see e.g.~\cite{Daneri-Runa}). 

In the case of  double-well problems, the location of the parameter in one or the other well relaxes the strict constraint that the parameter takes two values; that is, the constraint $z\in\{-1,1\}$ is replaced by a potential $\psi(z)$ where $\psi$ is a strictly positive continuous function minimized in $\{-1,1\}$. To distinguish between them, we will call the first type of parameters ``hard spins'' and the second ones ``soft spins''. For the prototypical double-well potential $\psi$ in \eqref{eq=0} it is clear that the two ``wells'' coincide with $z$ negative and $z$ positive. 
If also long-range interactions are taken into account, minimization for soft spins may produce patters analogous to those for frustrated hard spins. Furthermore, the ``soft'' approach allows to include more easily boundary-value problems as above.

In this paper we carry on a fine analysis of the energies $E_n$ in \eqref{eq=1} by examining not only minimizers, but also parameters $u_i$ whose energy in \eqref{eq=1} differs from the minimum by a finite quantity bounded as $n$ tends to $+\infty$. This is done by using a development by $\Gamma$-convergence \cite[Section 1.10]{GCB} and \cite{BT}, and is performed  for double-well $\psi$ and $a_k\neq 0$ only for $k=M$, so that the descriptions (a) and (b) above provide the value of minima. The meaningful definition of convergence for functions $u$ depends on whether we are in case (a) or (b) above. For simplicity of illustration consider that the boundary datum $\ell$ is such that case (a) holds for some $m$. Then, given a sequence $u^n_i$ with bounded energy, the sequence is compact in the following sense: there are a finite number of indices $i^n_j$, which we may suppose to converge after scaling; that is, $i^n_j/n\to x_j$, such that in the intervals in the complement of such indices, each function coincides with a $M$-periodic minimizer $\overline u_i$ as in (a), up to an arbitrary small error. Hence, up to subsequences, each such sequence determines a continuum limit ${\bf u}$ whose derivative $\mathbf{u}'$ takes values in the finite set ${\bf M}_m$ of $M$-periodic minimizers as in (1). A similar argument holds in case (b), for which we can conclude that the continuum limit ${\bf u}$ has derivative with values in ${\bf M}_m\cup{\bf M}_{m+1}$. Once such a piecewise-constant limit is defined, we will prove that the $\Gamma$-limit has the form
\begin{equation}\label{eq=2}
F(u)=\sum_{t\in S({\bf u}')}  \Phi ({\bf u}'(t^-),{\bf u}'(t^+)),
\end{equation}
where $S({\bf u}')$ denotes the discontinuity set of ${\bf u'}$ in $(0,1)$. This shows that minimization may give rise to microscopic patterns ${\bf M}_m$, microscopic incompatibility may give rise to interfaces between elements of ${\bf M}_m$ (anti-phase boundaries) or/and interfaces between elements of ${\bf M}_m$ and ${\bf M}_{m+1}$ (macroscopic interfaces). In order to avoid boundary effects, the analysis will be carried out under some periodicity assumptions.

It is interesting to note that, even though the values of the slopes of microscopic minimizers in ${\bf M}_m$ depends on the average slope, or boundary datum, $\ell$, the set ${\bf M}_m$ has a `universal' form, and its elements are in correspondence with $M$-tuples with $m$ values equal to $1$ and $(M-m)$ values equal to $0$. A final remark is that the presence of $M$-th-neighbour interaction is often compared to that of a singular perturbation with a term containing the $M$-th derivative for continuum double-well problems. However, while in the continuum case the resulting phase-transition energy is essentially independent of $M$ (see \cite{brusca2024higherorder,solci2024freediscontinuity}), in the discrete case our result shows an increasing complexity of minimizers as $M$ increases.

\section{Statement of the result}
We will fix $M\in\mathbb N$ with $M\ge 2$ and functions $\psi_1,\psi_M\colon\mathbb R\to[0,+\infty)$ satisfying the coerciveness condition 
\begin{equation}\label{growth cond hyp} \lim_{\abs{z}\rightarrow \infty} \frac{\psi_1(z)}{\abs{z}} = + \infty.
\end{equation}
We want to study the overall behaviour of functionals with competing nearest-neighbour and M-th-neighbour interactions driven by the potential $\psi_1$ and $\psi_M$, respectively, of the form 
\begin{equation}\label{EnM-inf}
\sum_{i}\psi_1\left({u_{i+1}-u_{i}}\right) + \sum_{i}\psi_M\Big(\frac{u_{i+M}-u_{i}}{M}\Big),
\end{equation}
defined on discrete functions indexed on $\mathbb Z$. In our assumptions the potential $\psi_1$ will be a double-well energy density which favours oscillations through non-convexity, while $\psi_M$ is a convex potential favouring long-range ordering. 

\subsection{Analysis at the bulk scaling}
We preliminary analyze a scaled version, whose analysis will suggest a renormalization argument.
 We use a standard scaling procedure that allows to use an analytic approach by $\Gamma$-convergence, introducing a reference interval $[0,1]$ and the small parameter $\e_n=\frac{1}n$ with $n\in \mathbb N$. The energies above, when we take into account the interaction involved on $n+1$ sites, now parameterized by $\e_ni$ with $i\in\{0,\ldots, n\}$, take the form 
 \begin{equation}\label{EnM-0}
\sum_{i=0}^{n-1} \psi_1\Big(\frac{u_{i+1}-u_{i}}{\e_n}\Big) + \sum_{i=0}^{n-1} \psi_M\Big(\frac{u_{i+M}-u_{i}}{M\e_n}\Big).
\end{equation}
In this notation $u_i=u(\e_n i)$. Note that in the last sum we also take into account the values of $u_i$ for $i\in\{n ,\ldots, n +M-1 \}$.
In the sequel, in order not to have boundary effects, we will define $u_i$ for all values of $i$ using some periodic conditions. 

After this parameterization, we can identify such discrete functions with the piecewise-affine interpolation on $[0,1]$ of the sites $(i\e_n, u_i)$. 
We define the space of such functions
\[
\mathcal{A}_n(0,1) = \{u:[0,1] \rightarrow \R \text{ continuous,  and affine on } (i\e_n, (i+1)\e_n), \,i\in\{0,\dots, n-1\}\},
\]
and the scaled functionals
 \begin{equation}\label{EnM}
 E_{n,M}(u)=\sum_{i=0}^{n-1} \e_n\psi_1\Big(\frac{u_{i+1}-u_{i}}{\e_n}\Big) + \sum_{i=0}^{n-1 }\e_n \psi_M\Big(\frac{u_{i+M}-u_{i}}{M\e_n}\Big).
\end{equation}
Since
$$
\sum_{i=0}^{n-1} \e_n\psi_1\Big(\frac{u_{i+1}-u_{i}}{\e_n}\Big)=\int_0^1 \psi_1(u')\,dt,
$$ 
condition \eqref{growth cond hyp} ensures that functionals $ E_{n,M}$ are equicoercive in $W^{1,1}(0,1)$; namely, that if $u^n$ is bounded in $L^1(0,1)$ and $E_{n,M}(u_n)\le C<+\infty$, then, up to subsequences, $u^n$ converge weakly in $W^{1,1}(0,1)$ and strongly in $L^1(0,1)$. 
The $\Gamma$-limit of $E_{n,M}$ with respect to this convergence is described in the following result, where we also consider periodic conditions. To that end we fix $\ell\in\mathbb R$ and define
    \[W^{1,1}_{\#,\ell}(0,1) = \Big\{u\in W^{1,1}_{\rm loc}(\R): \; u(t)-\ell t \text{ is } L\text{-periodic}\Big\},\]
whose discrete counterpart is 
    \[ \mathcal{A}_{n,\ell}^\#(0,1)= \{u\in W^{1,1}_{\#,\ell}(0,1) : \, {u|}_{[0,1]} \in \mathcal{A}_n(0,1)\}.\]

\begin{theorem}\label{0-conv}
The functionals $E_{n,M}$ $\Gamma$-converge, with respect to the $L^1$-topology, to
the functional defined on $W^{1,1}(0,1)$ by
\begin{equation}
\int_0^1 \psi_0^{**}(u^\prime(t))\;dt,
\end{equation}
   where $\psi_0$ is given by:
\begin{equation}\label{psizero}
    \psi_0(z) = \psi_M(z) + \frac{1}{M}\min\left\{\sum_{k=1}^M\psi_1(z_k): \;  \sum_{k=1}^Mz_k = Mz,\, z_1, \dots z_M \in \R \right\}.
\end{equation}

Furthermore, the convergence is compatible with the addition of periodic condition; that is, 
with fixed $ \ell \in\mathbb R$, the functionals defined by
$$
E^\ell_{n,M}(u) = \left\{\begin{aligned}
        &E_{n.M}(u) &\qquad &u\in \mathcal{A}_{n,\ell}^\#(0,1)\\
        &+\infty &\quad &\text{otherwise}
    \end{aligned}\right.
$$
$\Gamma$-converge to 
$$
    E^\ell_M(u) = \left\{
    \begin{aligned}
        &\int_0^1 \psi_0^{**}(u^\prime(t))\;dt &\qquad &u\in W^{1,1}_{\#,\ell}(0,1) \\
        &+\infty &\quad &\text{otherwise in $L^1(0,1)$.}
    \end{aligned}
    \right.
$$
\end{theorem}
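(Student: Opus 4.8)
Compactness and equi-coerciveness in $W^{1,1}(0,1)$ are already granted by \eqref{growth cond hyp}, so I only need the two $\Gamma$-convergence inequalities for $F(u)=\int_0^1\psi_0^{**}(u')\,dt$. For a competitor $u^n\in\mathcal A_n(0,1)$ write $z^n_i=(u^n_{i+1}-u^n_i)/\e_n$, so that $(u^n)'\equiv z^n_i$ on $(i\e_n,(i+1)\e_n)$ and the $M$-th neighbour increment is the sliding average $\frac1M\sum_{k=0}^{M-1}z^n_{i+k}$. I record the elementary consequence of \eqref{psizero} that every $M$-tuple $w_1,\dots,w_M$ of mean $\bar w$ satisfies $\sum_{k=1}^M\psi_1(w_k)\ge M\,h(\bar w)$, where $h:=\psi_0-\psi_M$ is the bracketed minimum.

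\textbf{Lower bound.} Given $u^n\to u$ in $L^1$ with $\sup_n E_{n,M}(u^n)<\infty$, superlinearity of $\psi_1$ forces $\{(u^n)'\}$ to be equi-integrable, hence $(u^n)'\rightharpoonup u'$ weakly in $L^1(0,1)$. The difficulty is that estimating the non-convex short-range term and the convex long-range term separately only yields the strictly smaller density $\psi_M+\psi_1^{**}$, so the sharp $\psi_0^{**}$ must come from using both on the same block. The device I would use is to \emph{group the long-range contributions by congruence class modulo $M$}: for each residue $r\in\{0,\dots,M-1\}$ the windows centred at $jM+r$ are pairwise disjoint and tile $\Z$, so, writing $\hat z^{(r)}_j$ for the mean of $z^n$ over the block $B^{(r)}_j=\{jM+r,\dots,(j+1)M+r-1\}$, the long-range sum splits as $S_{\rm LR}=\sum_{r=0}^{M-1}S^{(r)}_{\rm LR}$ with $S^{(r)}_{\rm LR}=\sum_j\psi_M(\hat z^{(r)}_j)$. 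Using the identity $S_{\rm NN}+S_{\rm LR}=\frac1M\sum_{r=0}^{M-1}\big(S_{\rm NN}+M\,S^{(r)}_{\rm LR}\big)$ and applying, for each fixed $r$, the block estimate $\sum_{i\in B^{(r)}_j}\psi_1(z^n_i)\ge M\,h(\hat z^{(r)}_j)$ alongside the aligned term $M\psi_M(\hat z^{(r)}_j)$, each inner bracket is bounded below by $M\sum_j\psi_0(\hat z^{(r)}_j)\ge M\sum_j\psi_0^{**}(\hat z^{(r)}_j)$.

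\textbf{Conclusion of the lower bound and upper bound.} Multiplying by $\e_n$ and reading each residue sum as $\int_0^1\psi_0^{**}((w^{(r)}_n)')\,dt$, where $w^{(r)}_n$ is piecewise affine with slope $\hat z^{(r)}_j$ on the $r$-shifted blocks, gives $E_{n,M}(u^n)\ge\frac1M\sum_{r=0}^{M-1}\int_0^1\psi_0^{**}((w^{(r)}_n)')\,dt$. Since averaging at the vanishing scale $M\e_n$ preserves weak $L^1$-convergence, $(w^{(r)}_n)'\rightharpoonup u'$ for every $r$, and the convexity and superlinearity of $\psi_0^{**}$ make $v\mapsto\int_0^1\psi_0^{**}(v')$ weakly lower semicontinuous, so averaging the $M$ identical bounds yields $\liminf_n E_{n,M}(u^n)\ge\int_0^1\psi_0^{**}(u')\,dt$ (windows overrunning the endpoint being harmless by the periodic extension used in \eqref{EnM}). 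For the limsup I would work on the $L^1$-dense class of piecewise-affine targets: on a region where $u'\equiv z$, repeating an optimal $M$-tuple realizing \eqref{psizero} makes every window-average equal to $z$, giving density $\psi_M(z)+h(z)=\psi_0(z)$; to reach the envelope, when $\psi_0^{**}(z)=\lambda\psi_0(z')+(1-\lambda)\psi_0(z'')$ I alternate the two periodic microstructures on mesoscopic intervals of length $\delta_n$ in proportion $\lambda:(1-\lambda)$, choosing $\e_n\ll\delta_n\ll1$ so that the $O(1/\delta_n)$ transitions, each carrying $O(M\e_n)$ of spurious energy, contribute $o(1)$; lower semicontinuity of the $\Gamma$-$\limsup$ and superlinear growth then pass to general $u\in W^{1,1}(0,1)$.

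\textbf{Periodic conditions and main obstacle.} For the periodic statement the liminf inequality is inherited verbatim, since periodicity only shrinks the admissible class and the limit stays in $W^{1,1}_{\#,\ell}(0,1)$; for the limsup one adapts the construction inside $\mathcal A^\#_{n,\ell}(0,1)$, fixing the mean slope to $\ell$ and correcting the profile on $O(1)$ sites near the endpoints to enforce $u_0=0$, $u_n=\ell$ and the periodic matching, at cost $O(\e_n)=o(1)$, there being no genuine boundary layer. The conceptual crux is the lower bound, precisely because the short-range non-convexity and the long-range convexity must be exploited on the \emph{same} block; the regrouping modulo $M$ is exactly what converts the overlapping sliding windows into block-aligned, tileable sums and lets the two terms recombine into $\psi_0$. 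The remaining genuinely technical point is the scale bookkeeping $\e_n\ll\delta_n\ll1$ in the recovery sequence, guaranteeing that the long-range interactions straddling the $O(1/\delta_n)$ microstructure interfaces do not accumulate energy in the limit.
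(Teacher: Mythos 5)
Your proposal is correct and its architecture coincides with that of the paper's proof (given in the appendix): the lower bound rests on exactly the same regrouping of the $M$-th-neighbour terms by residue class modulo $M$, so that each class pairs with $\frac1M$ of the nearest-neighbour sum and the minimum defining $\psi_0$ in \eqref{psizero} applies blockwise, while the upper bound proceeds by density from the periodic microstructure obtained by repeating an optimal $M$-tuple. The two places where you genuinely deviate are the closing steps. In the lower bound the paper tries to return from the block averages to the individual slopes by Jensen's inequality and then applies weak lower semicontinuity to $\int_0^1\psi_0^{**}((u^n)')\,dt$; you instead keep $\psi_0^{**}$ evaluated at the block averages, introduce the auxiliary interpolations $w^{(r)}_n$, and use that averaging at the vanishing scale $M\e_n$ preserves weak $L^1$-convergence together with equi-integrability. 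This is the more robust route: for convex $\psi_0^{**}$ Jensen gives $\psi_0^{**}(\bar z)\le\frac1M\sum_k\psi_0^{**}(z_k)$, i.e.\ the opposite of the inequality displayed in the paper, so your variant is the one that actually closes the argument as written. In the upper bound you build the laminate at an intermediate scale $\e_n\ll\delta_n\ll1$ explicitly, whereas the paper stops at the density $\psi_0$ on affine targets and passes to $\psi_0^{**}$ by taking the lower-semicontinuous envelope of the $\Gamma$-$\limsup$; since you also invoke the envelope argument, the explicit lamination is redundant but harmless. The only point you leave essentially unjustified is that the periodicity constraint survives the $L^1$ limit in the liminf inequality; as in the paper, this requires upgrading to locally uniform convergence (via equi-integrability of the derivatives and the compact Sobolev embedding) so that $u^n(x+1)-u^n(x)=\ell$ passes to the limit pointwise.
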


The proof of this result can be found in \cite{BCST} (see also \cite{BGS} for the case $M=2$).
%, and is included in Section \ref{0-sec} under weaker regularity assumptions. 

\subsection{Microscopic analysis}
The main point of the analysis at the bulk scaling is the definition of $\psi_0$ which will allow to renormalize energies \eqref{EnM-inf} by subtracting the affine term $r_\ell$ given by the tangent to $\psi^{**}_0$ at $\ell$ and rewriting the sum as
\begin{equation}\label{EnM-inf-r}
\sum_{i =0}^{n-1} \bigg(\psi_M\Big(\frac{u_{i+M}-u_{i}}{M}\Big)+\frac1M\sum_{k=0}^{M-1}\psi_1({u_{i+k+1}-u_{i+k}})- r_\ell\Big(\frac{u_{i+M}-u_{i}}{M}\Big)\bigg).
\end{equation}
This will be formalized as the computation of a higher-order $\Gamma$-limit starting from $E_{n,M}$. 
We will consider periodic boundary conditions. We also make the simplifying assumption that $n$ is a multiple of $M$, the general case requiring a more complex notation being stated explicitly in Section \ref{res-2}, taking into account possible mismatch at the boundary due to incommensurability.
In this case, after noting that $\min E^\ell_M= \psi_0^{**}(\ell)$, and letting $m=m_\ell$ denote the
tangent to $\psi^{**}_0$ at $\ell$, we can consider the energies 
\begin{eqnarray}
\label{defn diff quot Dirichlet}\nonumber
    E^1_{n,M}(u) &=& E^{1,\ell}_{n,M}(u):= \frac{E^\ell_{n,M}(u) - \min E^\ell_M}{\e_n} = \frac{E^\ell_{n,M}(u) - \psi_0^{**}(\ell)}{\e_n}\\ \nonumber
    &=& \sum_{i=0}^{n -1} \bigg(\psi_M\left(\frac{u_{i+M}-u_i}{M\e_n}\right) + \frac{1}{M}\sum_{k=0}^{M-1}\psi_1\left(\frac{u_{i+k+1}-u_{i+k}}{\e_n}\right) - \psi_0^{**}(\ell)\bigg)
    \\ \nonumber
    &=& \sum_{i=0}^{n  {-1}} \bigg(\psi_M\left(\frac{u_{i+M}-u_i}{M\e_n}\right) + \frac{1}{M}\sum_{k=0}^{M-1}\psi_1\left(\frac{u_{i+k+1}-u_{i+k}}{\e_n}\right) - \psi_0^{**}(\ell) -m\Big(\frac{u_{i+M}-u_i}{M\e_n}-\frac{\ell}{\e_n}\Big)\bigg)
    \\ 
    &=& \sum_{i=0}^{n  {-1}}\bigg(\psi_M\left(\frac{u_{i+M}-u_i}{M\e_n}\right) + \frac{1}{M}\sum_{k=0}^{M-1}\psi_1\left(\frac{u_{i+k+1}-u_{i+k}}{\e_n}\right) - r_\ell\Big(\frac{u_{i+M}-u_i}{M\e_n}\Big)\bigg),
\end{eqnarray}
where we have used that $\sum_{i=  {0}}^{ {n-1}} (u_{i+M}-u_i)= M\ell$ thanks to the $n$-periodicity of $u_i -\ell\e_n i$.

Until now we have made no assumptions on $\psi_1$ and $\psi_M$.  We study a particular case in which $\psi_M$ is a strictly convex function and $\psi_1$ is a double-well potential of the form
\[\psi_1(z) = \min \{W_1(z), W_2(z)\},\]
where $W_1$ and $W_2$ are two smooth convex functions. Note that this is not a very restrictive hypothesis since in the determination of the $\Gamma$-limit only the values of $W_1$ and $W_2$ close to the bottom of the wells will be taken into account, so that more general $\psi_1$ of double-well type can be included in this analysis.
We also make the assumption that $W_1$ and $W_2$ satisfy
\[W_i(z) \geq cz^2 -\frac1c,\]
for some $c>0$. 
This structure provides a useful representation of $\psi_0$. Indeed, we can distinguish two sets $A_1 = \{x\in \R \, |\, \psi_1(x) = W_1(x)\}$ and $A_2 = \{x \in \R \, |\, \psi_1(x) = W_2(x)\}$. Then, we can rewrite the minimum problem in $\psi_0$ as follows:
\begin{eqnarray}\label{interior-min}\nonumber
&&\min\Bigg\{\sum_{k=1}^M\psi_1(z_k) :\; \sum_{k=1}^Mz_k = Mz\Bigg\} \\
&=& \nonumber\min_{0\leq j\leq M}\, \min\Bigg\{\sum_{k=1}^j W_1(z_k) + \sum_{k=j+1}^M W_2(z_k):\; \sum_{k=1}^Mz_k = Mz,\, z_1,\dots, z_j \in A_1, \, z_{j+1},\dots, z_M \in A_2\Bigg\}\\
&=& \min_{0\leq j \leq M}\, \min\Big\{j W_1(z_1) + (M-j) W_2(z_2):\; jz_1 + (M-j)z_2 = Mz,\, z_1\in A_1, \, z_2\in A_2\Big\},
\end{eqnarray}
where the last equality follows from Jensen inequality. 

%\begin{wrapfigure}{l}{0.35\textwidth}
\begin{figure}[h]
\centering
\includegraphics[width=0.4\linewidth]{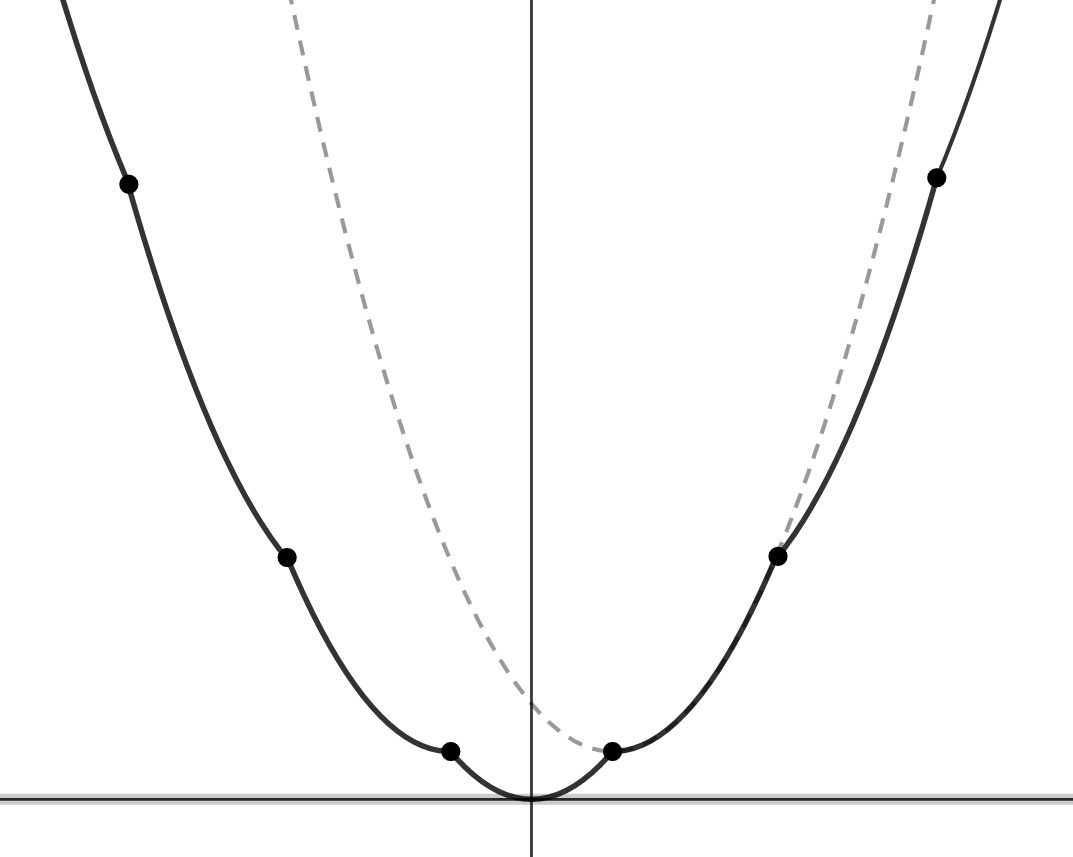} 
\label{fig:psi0} 
\caption{representation of $\psi_0$.}\label{Fig1}
\end{figure}
%\end{wrapfigure}
We can define $\overline{\psi}_j(z)=\psi_M(z) + \frac{1}{M}f_j(z)$, where $f_j(z)$ is the value of the inner minimum problem in \eqref{interior-min}.
 In this way $\psi_0$ can be represented as follows:
\[\psi_0(z) = \min_{0\leq j \leq M} \overline{\psi}_j(z).\] See Fig.~\ref{Fig1} for a typical form of $\psi_0$. 

A non-trivial result in \cite{BCST} (see Theorem 4.1 therein) shows that $\psi_0^{**}$ will alternate non-degenerate intervals $K_j = (z^l_j, z^r_j)$ in which $\psi_0^{**}(z) = \psi_0(z) = \overline{\psi}_j(z)$ and intervals $J_j = [z^r_{j-1}, z^l_j]$, for $j\geq 1$, in which $\psi_0^{**}(z) = r_j(z)$ is a straight line. 
In this notation $z^l_0 = -\infty$ and $z^r_M = + \infty$.

Referring to \eqref{defn diff quot Dirichlet}, this description gives that if $\ell\in K_j$ then 
$$
r_\ell(z)= \psi_0(\ell)+  \psi'_0(\ell)(z-\ell)=  \overline{\psi}_j(\ell)+  \overline{\psi}_j'(\ell)(z-\ell),
$$
while if $\ell\in J_j$ then $r_\ell$ is the affine function whose graph passes through $(z^r_{j-1}, \overline{\psi}_{j-1}(z^r_{j-1}))$
and $(z^l_{j}, \overline{\psi}_{j}(z^l_{j}))$.

Note that if we consider
\begin{equation}
\label{mathcal E}
    \mathcal{E}_n^i(u) = \psi_M\left(\frac{u_{i+M}-u_i}{M\e_n}\right) + \frac{1}{M}\sum_{k=0}^{M-1}\psi_1\left(\frac{u_{i+k+1}-u_{i+k}}{\e_n}\right) - r_\ell\left(\frac{u_{i+M}-u_i}{M\e_n}\right),
\end{equation}
then these values are still positive since $\mathcal{E}^i_n(u) \geq \psi_0^{**}\left(\frac{u_{i+M}-u_i}{M\e_n}\right) - r_\ell\left(\frac{u_{i+M}-u_i}{M\e_n}\right)\geq 0$. 
%The other two terms are bounded from below because $\psi_1(z)-r(z)$ is a continuous function which tend to infinity as $\abs{z} \rightarrow \infty$.\\ 
%In the setting of periodic boundary conditions we can define 
%\begin{equation}
%\label{defn diff quot Periodic}
%E^1_{n,M}(u) = \frac{E_{n,M}^\#(u) - \min E_M^\#}{\e_n}
%\end{equation}
%and exploiting the same computations we can come to an analogue development: however this time the two boundary energy terms cancel out and we obtain a much simpler expression for that quantity, that is to say
%\[
%E^1_{n,M}(u) = \sum_{i=0}^{n-1} \mathcal{E}^i_n(u).
%\]
%For this kind of simplification, for the first order $\Gamma$-convergence we will refer to the periodic case and, to simplify a little bit the notation we will rename $E^1_{n,M} = E_{n,M}^{1,\#}$.

Before stating the convergence result we need some definitions. 
\begin{defn}
\label{defn min set}
    We define $\mathbf{M}^\alpha\subset \R^M$ as the set of minimizers for the problem
    \begin{equation}
    \label{P}
    \tag{P}
       \min \left\{\sum_{i=1}^M \psi_1(z_i) : \;  \sum_{i=1}^M z_i = M \alpha \right\}. 
    \end{equation}
    We will then set    \[
    \mathbf{M}_\alpha = \begin{cases}
        \mathbf{M}^\alpha & \text{ if } \alpha\in \text{int}(\{z\in \text{dom}\,\psi_0 \,|\, \psi_0(z) = \psi_0^{**}(z)\}) \\
        \bigcup_{z\in J_j\cap \{\psi_0(z) = \psi_0^{**}(z)\}} \mathbf{M}^{z} 
        %\{\color{blue}
        %\mathbf{M}^{z^r_{j-1}}\cup \mathbf{M}^{z^l_j}}
        & \text{ if } \alpha \in J_j.
    \end{cases}
    \]
%    It will be useful to define $\mathbf{C}_\alpha$ as the quotient of $\mathbf{M}_\alpha$ under the equivalence of cyclic permutation of the entries, i.e. $z,z'\in \mathbf{M}_\alpha$ 
%    \[z \sim z' \quad \iff \quad \exists\,\sigma \text{ cyclic permutation s.t. } z_i = z'_{\sigma(i)}, \quad \forall i = 0,\dots, M\]
%    We will denote $[z]\in \mathbf{C}_\alpha$ the equivalence class of a $z\in \mathbf{M}_\alpha$. 
\end{defn}

\begin{Remark}
    $\mathbf{M}^\alpha$, and consequently also $\mathbf{M}_{\alpha}$, is closed under permutation, that is, if $z = (z_1,\dots, z_M) \in \mathbf{M}^\alpha$, then for any permutation $\sigma$ also $(z_{\sigma(1)}, \dots, z_{\sigma(M)})$ belongs to $\mathbf{M}^\alpha$.
\end{Remark}
\begin{Remark}
Using the notation for $K_j$ and $J_j$, Definition \ref{defn min set} can be simplified as
\[\mathbf{M}_\alpha = \begin{cases}
        \mathbf{M}^\alpha & \text{ if } \alpha\in \bigcup_{j=0}^M K_j \\
        \mathbf{M}^{z^r_{j-1}}\cup \mathbf{M}^{z^l_j}
        & \text{ if } \alpha \in J_j,\  j\in\{1,\ldots,M\}.
    \end{cases}
\]
    Since $\psi_1(z) = \min \{W_1(z), W_2(z)\}$ then we can study the cardinality of the sets $\mathbf{M^{\alpha}} $ because, for a fixed $j \in \{ 0, \dots M \}$, the minimum problem
    \[
    \min\Big\{j W_1(z_1) + (M-j) W_2(z_2) : \; jz_1 + (M-j)z_2 = M \alpha,\, z_1\in A_1, \, z_2\in A_2\Big\} \qquad \qquad (P_j)
    \]
    admits a unique solution by strict convexity. Each minimizer $(z_1,z_2)$ of $(P_j)$ corresponds to $\binom{M}{j}$ minimizers for \eqref{P}; that is, the number of possible $M$-tuple such that $j$ entries are $z_1$ and $M-j$ are $z_2$. 
    This shows that indeed for each $\alpha\in [z^l_j, z^r_j]$ the set ${\bf M}^\alpha$ can be identified with a set ${\bf M}_j$ depending only on $j$, and with this notation ${\bf M}_\alpha={\bf M}_{j-1}\cup {\bf M}_j $
if $\alpha\in J_j$.
%    Then, varying $j$ one gets a family of $M+1$ minimizing pairs of each problem $(P_j)$, hence the cardinality of $\mathbf{M}^{\alpha}$ is bounded from above by
%    \[
%    \sum_{j=0}^{M} \binom{M}{j} = 2^M < + \infty.
%    \]
%    Suppose that $M=2$, then the minumum problem became
%    \[
%    \min \{ \psi_1 (z_1) + \psi_1(z_2) \, : \, z_1 + z_2 = 2 \alpha \} 
%    \]
%    then, as observed in \cite{BC}, by strict convexity there are only two cases:
%    \begin{itemize}
%        \item $\# \mathbf{M}^{\alpha} = 1$, $\mathbf{M}^{\alpha} = \{ (\alpha, \alpha) \}$ and $\psi_0^{**} (\alpha) = \psi_2 (\alpha) + \psi_1 (\alpha)$ is usually called the stricty Cauchy-Born hypothesis
%        \item $\# \mathbf{M}^{\alpha} = 2$ and $\mathbf{M}^{\alpha} = \{ (z_1, z_2), (z_2, z_1) \}$ with $z_1 \neq z_2$ is usually called the local Cauchy-Born hypothesis
%    \end{itemize}
%    hence our bound is not sharp when $M$ is small.

\end{Remark}

In order to study the limit behaviour of $E_{n,M}^1$, we need a notion of convergence of discrete $u^n$ to a vectorial function on the continuum.

\begin{defn}
    Given $u\in \mathcal{A}_{n,\ell}^\#(0,L)$, we consider the subintervals
    \[
    I_j = [(j-1)\e_n, j \e_n] \qquad \text{for } j \in \Z
    \]
    and for $i \in \Z$ we define the set $A_i= \bigcup_{k=1}^MI_{iM+k}$, which is
    the union of $M$ such consecutive intervals.
    For $k\in\{1,\dots, M\}$, we define $u_k$ the $k$-th {\em M-interpolation} of $u$ as the piecewise-affine function obtained extending the slope $z^{Mi+k}$, where
     \[
    z^j := \frac{u_j-u_{j-1}}{\e_n},
    \]
to the whole $A_i$; that is, $u_k$ is defined by
    \[\begin{cases}
        u_{k,0} = u_0 , \\
        u'_k(t)=z^{Mi+k}\quad & \text{if } t\in A_i\\
    \end{cases}
    \]
%    \[\begin{cases}
%        u_{k,0} = u_0 , \\
%        u_k(t) = u_k(Mi\e_n) + z^{Mi+k}(t-Mi\e_n) \quad & \text{if } t\in A_i\\
%    \end{cases}
%    \]
\end{defn}
\begin{defn}
\label{def conv}
    We say that a sequence of discrete functions $(u^n)_n$ {\em converges} to $\overline{\mathbf{u}} = (\overline{u}_1, \dots, \overline{u}_M)$ in a functional topology (for instance $L^1(0,1)$ or $L^{\infty}(0,1)$) if for each $k$ the $k$-th interpolation $u_k^n$  converges to $\overline{u}_k$ in that topology.
\end{defn}

\begin{theorem}[Equi-coerciveness of the energies]
\label{compattezza}
If a sequence $(u^n)_{n\in \N}$ satisfies $\sup_n E^1_{n,M}(u^n) < + \infty$, then, up to addition of a constant and extraction of a subsequence, the sequence converges uniformly to some $M$-tuple of piecewise-affine functions $\mathbf{\overline{u}} = (\overline{u}_1, \dots, \overline{u}_M)$
with  $\overline{\mathbf{u}}'(t)\in \mathbf{M}_{\ell}$ for almost every $t$.
Moreover,  there exists  a finite set $S\subset (0,1)$ such that $u^n$ converges in $W^{1,\infty}_{\rm loc}((0,1) \setminus S)$ to $\mathbf{\overline{u}}$ and the jump set $S(\overline{\mathbf{u}}')$
is contained in $S$.
\end{theorem}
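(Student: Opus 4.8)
The plan is to deduce everything from the bound $\sup_n E^1_{n,M}(u^n)\le C$ together with the structure of $\psi_0^{**}$ recalled above. Writing $z^j=(u^n_j-u^n_{j-1})/\e_n$ and $w_i=(u^n_{i+M}-u^n_i)/(M\e_n)=\frac1M\sum_{k=1}^M z^{i+k}$, I first note that each summand $\mathcal E^i_n(u^n)$ in \eqref{mathcal E} is nonnegative, hence each is in fact $\le C$. Since $r_\ell$ is affine and $\psi_1$ is superlinear by \eqref{growth cond hyp}, the map $z\mapsto\psi_1(z)-m z$ is bounded below and coercive, so $\mathcal E^i_n(u^n)\le C$ forces a uniform bound $\abs{z^j}\le R$ on all slopes, with $R$ independent of $n,i,j$. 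Consequently $u^n$ and all its $M$-interpolations $u_k^n$ are $R$-Lipschitz; after fixing the additive constant by $u^n_0=0$ and using the periodic conditions, Arzel\`a--Ascoli yields a subsequence along which each $u_k^n$ converges uniformly to a Lipschitz limit $\overline u_k$. It remains to identify the limit and the convergence of the slopes.

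The second step is a coercivity analysis of the single-window energies. I would split $\mathcal E^i_n=A_i+B_i$ with $A_i=\psi_M(w_i)+\frac1M\sum_{k=1}^{M}\psi_1(z^{i+k})-\psi_0(w_i)\ge0$ and $B_i=\psi_0(w_i)-r_\ell(w_i)\ge0$, the latter because $\psi_0\ge\psi_0^{**}\ge r_\ell$. By the structure theorem of \cite{BCST} the contact set $\Sigma:=\{z:\psi_0(z)=r_\ell(z)\}$ is finite: it is $\{\ell\}$ if $\ell\in K_j$, and $\{z^r_{j-1},z^l_j\}$ if $\ell\in J_j$. Since $w_i$ ranges in the compact $[-R,R]$ and $B_i$ is continuous and vanishes exactly on $\Sigma$, smallness of $B_i$ forces $w_i$ close to $\Sigma$; likewise strict convexity of $W_1,W_2,\psi_M$ yields a modulus by which smallness of $A_i$ forces $(z^{i+1},\dots,z^{i+M})$ close to $\mathbf M^{w_i}$, hence close to $\mathbf M_\ell$. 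As $\sum_i\mathcal E^i_n\le C$, for any $\delta>0$ there are at most $C/\delta$ \emph{bad} windows with $\mathcal E^i_n>\delta$; choosing $\delta$ small enough that good windows have their slope vector $\eta$-close to $\mathbf M_\ell$ with $\eta$ below half the separation of the finitely many admissible well values, every slope in a good window is unambiguously assigned to one of the two wells.

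The third step is the rigidity that makes the good regions $M$-periodic. Consider a maximal run $i\in\{p,\dots,q\}$ of consecutive good windows. I claim the \emph{type} of the window --- which point of $\Sigma$ the average $w_i$ sits near, equivalently which of $\mathbf M^{z^r_{j-1}}$, $\mathbf M^{z^l_j}$ its slope vector approximates --- is constant along the run. Indeed two consecutive windows share $M-1$ slopes; if their types differed, a shared slope would be simultaneously $\eta$-close to a well value of one type and to a well value of the other. An Euler--Lagrange computation shows that the common multipliers $W_1'(z_1)=W_2'(z_2)$ at the two contact points differ (since $\psi_M$ is strictly convex and $z^r_{j-1}<z^l_j$), so the four admissible well values are pairwise distinct; for $\eta$ small this rules out any shared slope, contradicting $M\ge2$. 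With the type constant, every good window in the run contains the same number of well-$1$ slopes, so passing from window $i$ to $i+1$ (drop $z^{i+1}$, add $z^{i+M+1}$) preserves this count and forces $z^{i+M+1}=z^{i+1}$; hence the slope sequence is $M$-periodic along the run, and each $u_k^n$ has essentially constant slope there, equal to a fixed component of an element of $\mathbf M_\ell$.

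Finally I would assemble the pieces. There are at most $C/\delta$ bad windows, hence at most $C/\delta+1$ runs; the scaled positions $\e_n\cdot(\text{bad indices})$ lie in $[0,1]$ and, along a further subsequence, converge to a finite set $S\subset(0,1)$. On any compact subset of $(0,1)\setminus S$, for $n$ large all relevant windows belong to a single run, so by the previous step $u_k^n$ converges in $W^{1,\infty}_{\rm loc}$ to an affine function whose slope is the $k$-th component of a fixed element of $\mathbf M_\ell$; letting the subinterval vary shows, in the sense of Definition \ref{def conv}, that $\overline{\mathbf u}=(\overline u_1,\dots,\overline u_M)$ is piecewise affine with $\overline{\mathbf u}'(t)\in\mathbf M_\ell$ for a.e.\ $t$ and $S(\overline{\mathbf u}')\subseteq S$. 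I expect the main obstacle to be the coercivity and rigidity of the middle steps: producing a uniform modulus $\eta(\delta)\to0$ that makes the well-assignment unambiguous, and excluding intra-run type changes through the distinctness of the admissible well values; this is precisely where strict convexity of $W_1,W_2,\psi_M$ and the fine description of $\psi_0^{**}$ from \cite{BCST} are indispensable.
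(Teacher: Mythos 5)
Your argument is correct and follows essentially the same route as the paper's proof of Proposition \ref{prop-comp}: thresholding the nonnegative window energies $\mathcal{E}^i_n$, using the add-and-subtract-$\psi_0$ decomposition to force the slope vectors of the $O(C/\delta)$-complement of ``bad'' windows to within half the separation of $\mathbf{M}_\ell$, sending the bad positions to a finite set $S$, and concluding locally uniform convergence of the $M$-interpolations off $S$. The only differences are presentational: you derive the uniform slope bound directly from superlinearity of $\psi_1$ (the paper invokes equicoercivity and $H^1$-precompactness), and you make explicit the rigidity step—constancy of the type along a run of consecutive good windows via the overlap of $M-1$ shared slopes—which the paper uses implicitly when it estimates $C\ge\eta(N_n-1)$.
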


This compactness theorem justifies the use of the convergence in Definition \ref{def conv} for the  $\Gamma$-limit of $  E^1_{n,M}$. In order to describe it we define the following transition energy.

\begin{defn} \label{def-trans}
    Let $\ell \in \R$ and $\mathbf{z} = (z_1, z_2, \dots z_M), \mathbf{z'}= (z'_1, z'_2, \dots z'_M) \in \mathbf{M}_\ell$. The {\em transition energy} between $\mathbf{z}$ and $\mathbf{z'}$ is defined by
    \begin{align} \label{formula magica}
        \Phi ^{(\ell)} (\mathbf{z}, \mathbf{z'}) : = \inf_{N \in \N} \min_u \bigg \{ \notag & \sum_{i  \in \Z} \Big( \psi_M \Big( \frac{u_{i+M}-u_i}{M} \Big) 
         + \frac{1}{M} \sum_{k=0}^{M-1} \psi_1 (u_{i+k+1}-u_{i+k}) - r_\ell\Big( \dfrac{u_{i+M}-u_i}{M} \Big) \Big): \notag \\
%        & + \sum_{i \geq 0} \Bigg ( \psi_M \left ( \frac{u_{i+M}-u_i}{M} \right) 
%         + \frac{1}{M} \sum_{k=0}^{M-1} \psi_1 (u_{i+k+1}-u_{i+k}) - r_\ell \left ( \dfrac{u_{i+M}-u_i}{M}\right ) \Bigg ) : \notag \\
        & \; \; u : \Z \rightarrow \R,\,  u_i= u_{\mathbf{z}}(i) \text{ for } i \leq -N,\,
         u_i= u_{\mathbf{z'}}(i) \text{ for } i \geq N \Big\},
    \end{align}
    where $r_\ell$ is the tangent line to $\psi_0^{**}$ at $\ell$ computed at the point $x$ and,
    for any $\mathbf{z} \in \R^M$, $u_\mathbf{z}$ is the piecewise-affine function $\R\longrightarrow \R$ defined as follows:
    \begin{equation}
    \label{defn u_z}
    \begin{cases}
        u'_{\mathbf{z}}(t) = z_k \quad \text{for } t\in (k-1, k) + M\Z\\
        u_\mathbf{z}(0) = 0
    \end{cases}
    \end{equation}
    \end{defn}
    
    With this definition we can state the $\Gamma$-convergence result as follows
    
    \begin{theorem}[first-order $\Gamma$-limit with periodic boundary conditions]\label{1-teo}
    Assume that $\psi_M$ is a $C^1$ strictly convex function, $\psi_1$ is of class $C^1$ in a neighbourhood of $\mathbf{M}_\ell$ and $\psi_0^{**}$ is differentiable in $\ell$. We define the domain
    \begin{align*}
        D_\ell^\# = \{\mathbf{u}: \R\rightarrow \R^M \,|\,& \mathbf{u} \text{ (locally) piecewise affine}, \sum_{k=1}^{M} u_k(t)- M\ell t  \text{ 1-periodic,}     \\ & \mathbf{u}'\text{ 1-periodic and } \mathbf{u}'(t)\in \mathbf{M}_{\ell}\; \text{ for all } t\in \R\setminus S(\mathbf{u}')\}.
    \end{align*}
    Then
    $E^{1}_{n,M}$ $\Gamma$-converges, with respect to the $L^{\infty}$ topology, to 
\begin{equation}\label{Phifu}
E^1_M (\mathbf{u}) =  \sum_{t\in S(\mathbf{u}')\cap (0,1]}\Phi^{(\ell)}(\mathbf{u}'(t-), \mathbf{u}'(t+)),
\end{equation}
    with domain $D_\ell^\#$.
\end{theorem}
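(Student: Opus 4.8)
The plan is to prove the two $\Gamma$-convergence inequalities separately, using throughout that the per-site densities in \eqref{mathcal E} are nonnegative and vanish exactly on the periodic minimizers $u_{\mathbf{z}}$ with $\mathbf{z}\in\mathbf{M}_\ell$. Indeed, for such $\mathbf{z}$ the $M$-th difference quotient of $u_{\mathbf{z}}$ is the constant $\bar z:=\tfrac1M\sum_k z_k$ (equal to $\ell$ when $\ell\in K_j$, and to $z^r_{j-1}$ or $z^l_j$ when $\ell\in J_j$), and since $\psi_0(\bar z)=\psi_0^{**}(\bar z)=r_\ell(\bar z)$ at these points a direct computation gives $\mathcal{E}^i_n(u_{\mathbf{z}})=0$. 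Hence all the energy must concentrate on the microscopically thin regions where the discrete slopes pass from one element of $\mathbf{M}_\ell$ to another, and Theorem \ref{compattezza} furnishes exactly the finite set $S$ and the $W^{1,\infty}_{\rm loc}((0,1)\setminus S)$ convergence that localizes this concentration at the jump set $S(\mathbf{u}')$.

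For the liminf inequality I would fix $\mathbf{u}\in D_\ell^\#$ and $u^n\to\mathbf{u}$ with $\sup_n E^1_{n,M}(u^n)<+\infty$, and localize around each $t_0\in S(\mathbf{u}')$. Choosing $\delta$ so small that $(t_0-\delta,t_0+\delta)$ isolates a single jump and retaining only the nonnegative terms $\mathcal{E}^i_n(u^n)$ with $i\e_n\in(t_0-\delta,t_0+\delta)$, I pass to microscopic coordinates $v_p=u^n_{i_0+p}/\e_n$ with $i_0\approx t_0/\e_n$; then $\tfrac{u^n_{i+M}-u^n_i}{M\e_n}=\tfrac{v_{p+M}-v_p}{M}$, so the localized energy equals precisely the sum appearing in \eqref{formula magica} for the profile $v$. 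The $W^{1,\infty}_{\rm loc}$ convergence away from $t_0$ forces the slopes of $v$ to be uniformly close to the components of $\mathbf{z}=\mathbf{u}'(t_0-)$ on the left part of the (microscopically diverging) window and to those of $\mathbf{z}'=\mathbf{u}'(t_0+)$ on the right. The decisive step is then to cut and paste $v$ near the two ends so that it equals $u_{\mathbf{z}}$ for $p\le -N$ and $u_{\mathbf{z}'}$ for $p\ge N$: using the $C^1$ regularity of $\psi_1,\psi_M$ near $\mathbf{M}_\ell$, the vanishing of $\mathcal{E}^i$ on minimizers, and the strict convexity of $(P_j)$ pinning the minimizing slopes $z_1\in A_1,\ z_2\in A_2$, this modification costs $o(1)$ and produces an admissible competitor for $\Phi^{(\ell)}(\mathbf{z},\mathbf{z}')$. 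Thus the localized energy is $\ge\Phi^{(\ell)}(\mathbf{z},\mathbf{z}')-o(1)$, and summing over the finitely many points of $S(\mathbf{u}')$ (discarding the remaining nonnegative terms) gives the inequality.

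For the recovery sequence I would take $\mathbf{u}\in D_\ell^\#$ piecewise affine with jumps $\{t_1,\dots,t_J\}=S(\mathbf{u}')\cap(0,1]$ and build $u^n$ explicitly: on the constant-slope regions I paste the periodic minimizer $u_{\mathbf{z}_j}$, which costs zero energy, and around each $t_j$ I insert, after scaling by $\e_n$, a profile realizing the infimum in \eqref{formula magica} for $\Phi^{(\ell)}(\mathbf{u}'(t_j-),\mathbf{u}'(t_j+))$ up to $\eta$ and supported on some $[-N,N]$. Since such a profile coincides with $u_{\mathbf{z}}$, $u_{\mathbf{z}'}$ outside $[-N,N]$, the gluing with the adjacent periodic minimizers is exact and respects the phase convention \eqref{defn u_z}; the inserted regions have microscopic length $2N\e_n\to0$, so the $M$-interpolations $u^n_k$ converge uniformly to $\overline u_k$ in the sense of Definition \ref{def conv} and the periodicity constraints of $D_\ell^\#$ are met (here the assumption that $n$ is a multiple of $M$ reconciles the tilings between consecutive jumps). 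The energy equals $\sum_j\Phi^{(\ell)}(\mathbf{u}'(t_j-),\mathbf{u}'(t_j+))+O(J\eta)$, and a diagonal argument as $\eta\to0$ (with $N=N(\eta)$) yields the matching upper bound.

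The main obstacle I anticipate is the boundary-matching step of the liminf inequality. The set $\mathbf{M}_\ell$ is generally not a single point: it is closed under permutations and, when $\ell\in J_j$, equals the union $\mathbf{M}^{z^r_{j-1}}\cup\mathbf{M}^{z^l_j}$ of two distinct orbits. One must therefore show that on each side of $t_0$ the rescaled profile selects a single, well-defined element of $\mathbf{M}_\ell$ (with no residual oscillation between representatives as $\delta\to0$ and $n\to\infty$), identify it with $\mathbf{u}'(t_0\mp)$ including the residue-class labeling underlying the $k$-th $M$-interpolation, and then carry out the cut-and-paste to the exact minimizer while controlling the energy of the buffer sites. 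This requires combining the strict convexity that isolates the minimizers of $(P_j)$ with the uniform closeness provided by Theorem \ref{compattezza}, and it is also where the periodic-boundary bookkeeping and the possible cyclic relabeling of phases must be reconciled with Definition \ref{def conv}.
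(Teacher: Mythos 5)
Your overall strategy --- compactness to localize the energy at the jumps of $\mathbf{u}'$, a microscopic rescaling and cut-and-paste to compare each localized contribution with $\Phi^{(\ell)}$, and a recovery sequence obtained by splicing near-optimal transition profiles into the periodic minimizers --- is the same as the paper's. However, the two steps you treat as routine are precisely where the work (and the $C^1$/differentiability hypotheses) enters, and as written each is a genuine gap. In the liminf: after rescaling, the profile near the two ends of the window is close to $u_{\mathbf z}$ and $u_{\mathbf z'}$ only in \emph{slope}; its \emph{values} differ from those of the normalized minimizers of \eqref{defn u_z} by additive constants whose difference $\xi$ is in general of order $1$ and does not vanish as $n\to\infty$. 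An abrupt replacement of the tails by the exact minimizers therefore creates an $O(1)$ mismatch whose energy cost is not $o(1)$, so the truncated profile is only a competitor for the offset problem $\Phi(\mathbf z,\mathbf z',\xi)$, not for $\Phi(\mathbf z,\mathbf z')$. The paper closes this by proving $\Phi(\mathbf z,\mathbf z',\xi)=\Phi(\mathbf z,\mathbf z')$ through a linear ramp of slope $\xi/N$ spread over $N$ extra sites, whose cost vanishes by Fermat's theorem applied to the nonnegative $C^1$ function $f(\eta)$ vanishing at $\eta=0$. You name the right ingredients but never say where the offset goes; ``costs $o(1)$'' is not automatic for a cut-and-paste at fixed distance from the jump.

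In the limsup there is a second, independent omission: since the splice points $k^n_j$ must be compatible with the $M$-lattice they only approximate $nx_j$, so the pasted sequence has total increment $u^n_n-u^n_0=\sum_j\alpha_j(k^n_j-k^n_{j-1})\e_n\neq\ell$ in general, hence it is \emph{not} admissible for the periodically constrained energy and $E^{1,\ell}_{n,M}$ is $+\infty$ on it. One must add the linear correction $\delta_n\,i/n$ with $\delta_n=O(\e_n)$ and then show that tilting every bulk slope by $\delta_n$ over $\sim n$ sites costs $o(1)$; this is again a Fermat-type estimate, $(k^n_j-k^n_{j-1})f_j(\delta_n)\le C\,f_j(\delta_n)/\delta_n\to C f_j'(0)=0$, made possible by the bound $(k^n_j-k^n_{j-1})\delta_n\le C$. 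Your remark that ``$n$ a multiple of $M$ reconciles the tilings'' addresses the residue-class bookkeeping of the $M$-interpolations but not this global increment mismatch. With these two points supplied --- the offset-invariance $\Phi(\mathbf z,\mathbf z',\xi)=\Phi(\mathbf z,\mathbf z')$ and the $\delta_n$-correction of the recovery sequence --- your argument coincides with the paper's proof.
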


\section{Proof of the results}
In this section, for a greater generality, we consider $L>0$ and functions parameterized on $[0,L]$ instead of $[0,1]$.
In this case $\e_n=\frac{L}n$, and we extend all the notation introduced in the case $L=1$. With fixed $\ell\in\mathbb R$, the energies we consider are directly written in the form
\begin{eqnarray}
\label{defn diff quot Dirichlet-1}
    E^1_{n,M}(u) := \sum_{i=0}^{n  {-1}} \bigg(\psi_M\left(\frac{u_{i+M}-u_i}{M\e_n}\right) + \frac{1}{M}\sum_{k=0}^{M-1}\psi_1\left(\frac{u_{i+k+1}-u_{i+k}}{\e_n}\right) - r_\ell\Big(\frac{u_{i+M}-u_i}{M\e_n}\Big)\bigg),
\end{eqnarray}
defined on $\mathcal{A}^\#_n(0,L)$.
Note that $ E^1_{n,M}(u)= \sum_{i= {0}}^{ {n-1}}\mathcal{E}^i_n(u)$, with $\mathcal{E}^i_n$ be given by \eqref{mathcal E}.

\subsection{Compactness}\label{res-1}
We can state the compactness result as in Theorem \ref{compattezza} independently of the boundary condition as follows.

\begin{Proposition}\label{prop-comp}Let $\mathcal{E}^i_n$ be given by \eqref{mathcal E}. If 
$$
\sup_n \sum_{i= {0}}^{ {n-1}}\mathcal{E}^i_n(u^n)=: C<+\infty,
$$
then $u^n$ satisfies the claim of Theorem \ref{compattezza} with the interval $[0,L]$ in the place of the interval $[0,1]$.
\end{Proposition}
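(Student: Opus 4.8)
The plan is to turn the bounded sum of nonnegative quantities $\sum_i\mathcal{E}^i_n(u^n)\le C$ into three pieces of information: a uniform Sobolev bound giving a candidate limit, a uniform bound on the number of ``bad'' $M$-blocks, and a rigidity (``locking'') argument exploiting that $\mathbf{M}_\ell$ is a \emph{finite} set. Throughout I write $z^j=(u^n_j-u^n_{j-1})/\e_n$ and, for a window starting at $i$, $\bar z_i=\tfrac1M\sum_{k=1}^M z^{i+k}=\frac{u^n_{i+M}-u^n_i}{M\e_n}$, so that $\mathcal{E}^i_n(u^n)=F(z^{i+1},\dots,z^{i+M})$ with $F(z_1,\dots,z_M):=\psi_M(\bar z)+\tfrac1M\sum_{k}\psi_1(z_k)-r_\ell(\bar z)$ and $\bar z=\tfrac1M\sum_k z_k$.

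First I would recover the zeroth-order compactness. Since each $\mathcal{E}^i_n(u^n)\ge0$, the relation $E^\ell_{n,M}(u^n)=\e_n\sum_i\mathcal{E}^i_n(u^n)+\psi_0^{**}(\ell)$ shows $E^\ell_{n,M}(u^n)$ is bounded; together with $\psi_M\ge0$, the affine growth of $r_\ell$, and the lower bound $\psi_1(z)\ge cz^2-\tfrac1c$ inherited from $W_i(z)\ge cz^2-\tfrac1c$, a standard estimate yields $\sum_j\e_n(z^j)^2\le C'$, hence a uniform $H^1(0,L)$ bound on each $M$-interpolation $u^n_k$ once the common additive constant $u^n_0$ is normalized to $0$. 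By the compact embedding $H^1\hookrightarrow C^0$ (Arzelà–Ascoli) I extract a subsequence along which $u^n_k\to\overline u_k$ uniformly for every $k$, producing the candidate $\overline{\mathbf u}=(\overline u_1,\dots,\overline u_M)$.

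The key single-block fact is that $F$ is continuous, coercive, nonnegative, and vanishes \emph{exactly} on $\mathbf{M}_\ell$. Coerciveness is clear because the superlinear term $\tfrac1M\sum_k\psi_1(z_k)$ dominates the affine $r_\ell$. Nonnegativity and the zero set follow from $F\ge\psi_M(\bar z)+\tfrac1M g(\bar z)-r_\ell(\bar z)=\psi_0(\bar z)-r_\ell(\bar z)\ge\psi_0^{**}(\bar z)-r_\ell(\bar z)\ge0$, where $g(\bar z)=\min\{\sum_k\psi_1(\zeta_k):\sum_k\zeta_k=M\bar z\}$: equality $F=0$ forces simultaneously $(z_k)\in\mathbf{M}^{\bar z}$, $\psi_0(\bar z)=\psi_0^{**}(\bar z)$ and $\psi_0^{**}(\bar z)=r_\ell(\bar z)$, and the last two conditions, read off the $K_j$/$J_j$ structure of $\psi_0^{**}$, pin $\bar z$ to $\ell$ (if $\ell\in K_j$) or to $\{z^r_{j-1},z^l_j\}$ (if $\ell\in J_j$), i.e. exactly to $\mathbf{M}_\ell$. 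Consequently $\eta(\delta):=\inf\{F(z):\operatorname{dist}(z,\mathbf{M}_\ell)\ge\delta\}>0$ for every $\delta>0$, and the set $B^n_\delta$ of indices $i$ whose slope vector $(z^{i+1},\dots,z^{i+M})$ is $\delta$-far from $\mathbf{M}_\ell$ obeys $\#B^n_\delta\le C/\eta(\delta)$, uniformly in $n$.

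The main obstacle is the rigidity step. Here I would use that $\mathbf{M}_\ell$, being a finite union of permutation orbits of the unique solutions of the problems $(P_j)$, is a finite set, so its elements viewed as $M$-periodic patterns are separated by some $d_0>0$. Fixing $\delta<d_0/3$, inside any \emph{run} of consecutive indices avoiding $B^n_\delta$ each window has a unique nearest element of $\mathbf{M}_\ell$, and because consecutive windows overlap in $M-1$ slopes, any change of multiset-type or of arrangement would create a straddling window whose slope vector is at distance $\ge d_0-2\delta$ from $\mathbf{M}_\ell$, hence would lie in $B^n_\delta$; thus the slopes are \emph{locked} to a single $M$-periodic pattern (up to error $\delta$) along each run, and the number of pattern changes is at most $\#B^n_\delta\le C/\eta(\delta)$. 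A diagonal extraction in $\delta\to0$ then makes the positions $\e_n i$ of the bad windows converge to a finite set $S\subset[0,L]$; off $S$ the slopes are eventually locked to a fixed element of $\mathbf{M}_\ell$ with vanishing error, giving $W^{1,\infty}_{\mathrm{loc}}((0,L)\setminus S)$ convergence of each $u^n_k$ to an affine function with slope in $\mathbf{M}_\ell$, so $\overline{\mathbf u}$ is piecewise affine, $\overline{\mathbf u}'(t)\in\mathbf{M}_\ell$ a.e., and $S(\overline{\mathbf u}')\subseteq S$. I expect the delicate points to be precisely this locking lemma—formalizing that overlapping good windows cannot interpolate between two distinct permutation-patterns without paying energy—and organizing the diagonal extraction so that a single finite $S$ serves simultaneously for the uniform limit, the local strong convergence, and the containment of the jump set.
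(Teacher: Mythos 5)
Your proposal follows essentially the same route as the paper's proof: both bound the number of ``bad'' blocks by exploiting that the single-block energy $\mathcal{E}^i_n$ is nonnegative with zero set exactly $\mathbf{M}_\ell$ (you phrase this via $\eta(\delta)>0$, the paper fixes $\eta$ and deduces $\varepsilon$-closeness to a unique element of $\mathbf{M}_\ell$), both use the finiteness and separation of $\mathbf{M}_\ell$ together with overlapping windows to lock the pattern along runs of good indices, and both conclude by equicoercivity in $H^1$ plus a subsequence/diagonal extraction of the finitely many transition points to the set $S$. The locking step you flag as delicate is indeed the point the paper treats most tersely, but your mechanism for it matches theirs.
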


%\begin{Remark}
%    This theorem says that a recovery sequence for the minimum of the limit energy $E_M$, i.e. $u(x) = \frac{\ell}{L}x$, locally oscillate close to a minimizer of $E_{n,M}$ a part for a finite amount of exception concentrating on $S$.
%\end{Remark}
\begin{proof}
%We need first to prove the equi-boundedness of each interpolation $u'_{n,k}$.
%    By hypothesis $C \geq E^1_{n,M}(u^n)$, then $E_{n,M}(u^n) \leq \min E_M + C\e_n$. This means that $E_{n,M}(u^n)$ is bounded, and consequently we can find a constant $\Tilde{C}$ for which
%\begin{eqnarray*}
%M\Tilde{C} &\geq& M \sum_{i=0}^{n-1}\e_n\psi_1\left(\frac{u^n_{i+1}-u^n_i}{\e_n}\right)= \sum_{k=1}^{M} M \sum_{i=0, i \equiv k \text{ mod }M}^{n-1}\e_n\psi_1\left(\frac{u^{n}_{i+1}-u^{n}_i}{\e_n}\right)\\
%&=& \sum_{k=1}^{M} \sum_{i=0}^{n-1}\e_n\psi_1\left(\frac{u^n_{k,i+1}-u_{k,i}^n}{\e_n}\right) = \sum_{k=1}^{M} \int_0^1 \psi_1((u_{k}^n)'(t))\,dt.
%\end{eqnarray*}
%    But from the structure of $\psi_1$ we can say that $\psi_1(z) \geq c(\abs{z}-1)$ for some $c>0$, therefore we can find a positive constant $C'$ such that
%    \begin{equation}
%    \label{equibound}
%        \int_0^1 \abs{(u^{n}_k)'(t)}\,dt < C', \qquad \forall\,n,k
%    \end{equation}
%    Now by previous observations, it follows that
%    \[
%    \sum_{i=0} ^{n-1} \mathcal{E}^i_n (u^n) \leq C \qquad \forall n
%    \]
    Let $\eta>0$, then we define the set
    \[
    I_n (\eta ) : = \{ i \in \{0,1, \dots n-1\} \, : \, \mathcal{E}^i_n (u^n) > \eta \},
    \]
    and, as a consequence of the bound on the sum, we get that
    \[
    \sup_n \# I_n (\eta) \leq C(\eta) \sim C/\eta <+ \infty.
    \]
    Therefore, if $i\notin I_n(\eta)$, by adding and subtracting $\psi_0\left(\frac{u^n_{i+M}-u^n_i}{M\e_n}\right)$ to $\mathcal{E}^i_n(u^n)$, we obtain the two inequalities
    \[
        0 \leq \psi_M\left(\frac{u_{i+M}^n-u_i^n}{M\e_n}\right) + \frac{1}{M}\left(\sum_{k=0}^{M-1}\psi_1\left(\frac{u^n_{i+k+1}-u^n_{i+k}}{\e_n}\right)\right) - \psi_0\left(\frac{u^n_{i+M}-u^n_i}{M\e_n}\right)\leq \eta
    \]
    \[
        0 \leq \psi_0\left(\frac{u^n_{i+M}-u^n_i}{M\e_n}\right) - r_\ell\left(\frac{u^n_{i+M}-u_i^n}{M\e_n}\right)\leq \eta.
    \]
Note that, if $z$ and $(z_1, \dots, z_M)$ satisfy
    \begin{enumerate}
        \item[a)] $\psi_M\left(z\right) + \frac{1}{M}\left(\sum_{k=0}^{M-1}\psi_1\left(z_k\right)\right) - \psi_0\left(z\right)\leq \eta, \quad \text{for }\sum_{k=0}^{M-1} z_k = M z$
        \item[b)] $\psi_0\left(z\right) - r\left(z\right)\leq \eta$
    \end{enumerate}
    then $(z_1,\dots,z_M)$ is close to a minimizing $M$-tuple for the min in the definition of $\psi_0$, while $\psi_0(z)$ is close to $r(z)$, the tangent line of $\psi_0^{**}$ in $\ell$. This means that if $\psi_0(\ell)=\psi_0^{**}(\ell)$ then $z$ is close to $\ell$, while if $\ell$ is in some $J_j$ then $z$ is close to either $z^r_{j-1}$ or $z^l_J$.
    Hence, for $\eta$ small enough we can find $\e>0$ so that if (a) and (b) are satisfied then
    \[ \text{dist}\left((z_1,\dots, z_M), \mathbf{M}_\ell\right)\leq \e < \frac{1}{2}\min \{\abs{\mathbf{z}'-\mathbf{z}''}\,:\, \mathbf{z}',\mathbf{z}''\in \mathbf{M}_\ell\}.
    \]
    This entails that for each $i\notin I_n(\eta)$ there exists a unique $\mathbf{z}^n_i = (z^n_{i,1}, \dots, z^n_{i,M})\in \mathbf{M}_\ell$ such that 
    \begin{equation}
    \label{slope almost min}
\abs{\left(\frac{u^n_{i+1}-u^n_{i}}{\e_n}, \dots, \frac{u^n_{i+M}-u^n_{i+M-1}}{\e_n}\right) - \mathbf{z}^n_i}\leq \e,
        \implies\;\abs{\frac{u^n_{i+k}-u^n_{i+k-1}}{\e_n} - z^n_{i,k}}\leq \e, \quad 
    \end{equation}
for all $ k\in\{1, \dots, M\}$.  %{\color{blue}  Note that if $(z^n_{i+1,1}, \dots, z^n_{i+1,M})\neq (z^n_{i,2}, \dots, z^n_{i,M}, z^n_{i,1})$, then a transition from a minimizing configuration to another must be happening, resulting in $\mathcal{E}^{\Tilde{i}}_n(u^n)>\eta$ for some  $ \Tilde{i} > i$.} 
 Since $I_n(\eta)$ is a finite set, we can identify $N_n$ pairs of indices $0= j_0 \leq i_1<j_1< i_2 < j_2 <\dots < i_{N_n}< j_{N_n} \leq i_{N_n+1}= n$ such that the slopes of the $M$-interpolations of $u^n$ are close to an element of $[\mathbf{z}_{i_k}^n]$ in the sense of (\ref{slope almost min}) for all indices between $i_k$ and $j_k$, and they are not otherwise.
 
Since $C \geq E^1_{n,M}(u^n) \geq \eta (N_n-1)$, we must have that $N_n$ is bounded with respect to $n$. 
    Then, we can assume, up to extracting subsequences, that $N_n$ is constantly equal to $N$. Similarly, since $\mathbf{M}_\ell$ is a finite set, we can assume $\mathbf{z}_{i_k}^n = \mathbf{z}_k$.
With fixed $k\in\{ 2, \dots, N\}$, for each $n$ we chose an index $\Tilde{i}_k \in \{j_{k-1}+1, \dots, i_{k}-1\}$ and define the sequence $\{\Tilde{x}^n_k\}$ such that $\Tilde{x}^n_k = \Tilde{i}_k\e_n$, then, up to subsequences, we can assume $x^n_k \rightarrow x_k$.
    However, $\abs{i_k- j_{k-1}}$ must be bounded independently of $n$ and $k$, since
    \[C \geq E^1_{n,M}(u^n) \geq \sum_{k = 1}^{N+1}\eta \abs{i_k- j_{k-1}}.\]
    Then we have $\abs{\Tilde{x}^n_k - i\e_n}\leq \frac{C}{\eta}\e_n$ for any $i \in \{j_{k-1}+1, \dots, i_{k}-1\}$. Therefore, the whole $\{j_{k-1}+1, \dots, i_{k}-1\}\e_n$ is converging to $x_k$.
    
    Finally, we define the sets $S = \bigcup_{k=1}^N \{x_k\}$ and, for a fixed small $\delta$, $S_\delta = \bigcup_{k=1}^N (x_k-\delta, x_k + \delta)$.  
By \eqref{slope almost min}, we have that for $n$ large enough the $s$-th $M$-interpolation satisfies
    \begin{equation}
    \label{equicont}
        \sup_{t\in (0,1)\setminus S_\delta} \abs{(u_{s}^n)'(t) - Z_s(t)} \leq \e,
    \end{equation}
    where $\mathbf{Z}(t) = \left(Z_1(t), \dots, Z_M(t)\right)$ is a piecewise-constant function such that $\mathbf{Z}(t) = \mathbf{z}_k$ for $t \in (x_{k-1}, x_k)$.
    
    Note that from the equicoerciveness of $E_{n,M}$ and hence also of $E^1_{n,M}$, we obtain that $u_{n,s}$ are a precompact sequence in $H^1(0,L)$, so that we can suppose that they converge uniformly. 
%    Moreover, from \eqref{equicont}, if  $x,y \in (0,1)\setminus S_\delta$ with $x<y$, we have
%    \[\abs{u_{s}^n(y) - u_{s}^n(x)} = \abs{\int_x^y(u_{s}^n)'(t)\,dt} \leq \int_x^y \abs{Z_s(t)} + \abs{(u_{s}^n)'(t) - Z_s(t)}\,dt \leq (||Z_s||_\infty + \e)\abs{y-x}.\]
%    This show also the equicontinuity of the sequence; therefore, we can extract a uniformly converging subsequence on each compact in $(0,1)\setminus S$. 
    But from \eqref{equicont}, the subsequence is such that $(u_{s}^n)'$ is converging to $Z_s$. 
    In conclusion, by applying a diagonal argument, we have proved that, up to subsequences, $u^n$ converges in $W^{1,\infty}_{\rm loc}((0,L)\setminus S)$ (in the sense of Definition \ref{def conv}) to a function $\mathbf{u}$ such that $\mathbf{u}'(t) = Z(t) \in \mathbf{M}_\ell$ and the jump set $S(\mathbf{u}') \subseteq S$. \end{proof}
    
    Note that in this proof we have not used the periodicity condition. If it is taken into account that we have to note that $S(\mathbf{u}')$ can also contain the point $0$.

\subsection{Computation of the Gamma-limit}\label{res-2}
We now compute the $\Gamma$-limit subjected to periodic boundary conditions, without the simplifying assumption that $n$ be a multiple of $M$ used for presentation purposes in the previous section. We will show that the limit exists and can be characterized if more in general $n=q$ modulo $M$. The energy will have the same form as in the case $q=0$, but the characterization of the domain of the $\Gamma$-limit will depend on $q$, since $M$-interpolations do not necessarily inherit the periodicity condition of $u$. Note in particular that the limit as $n\to+\infty$ does not exist.

\smallskip
For  $n\in\mathbb N$ we let $q \in \{0,\dots, M-1\}$ denote its class modulus $M$ ($n\equiv q$ mod $M$), then given $u$ a piecewise-affine function in $W^{1,1}_{\#, \ell}(0,L)$, for any index $Mi+q$ with $q \in \{0,1,\dots,M-1\}$, its $M$-interpolations $u_k$ will satisfy:
\[\frac{u_{k,Mi+q} - u_{k,Mi + q-1}}{\e_n} = \frac{u_{Mi+k} - u_{Mi+k-1}}{\e_n} = \frac{u_{n + Mi+k} - u_{n+Mi+k-1}}{\e_n}.\]
Since $n = Mr+ q$, we write
\[\frac{u_{k,Mi+k} - u_{k,Mi + k-1}}{\e_n} = \frac{u_{M(i+r)+q+k} - u_{M(i+r) + q + k-1}}{\e_n},\]
which is linked to the $q+k$ interpolation (mod $M$) shifted of $\left\lfloor \frac{n+k}{M}\right\rfloor$. Thus, we have
\begin{equation}
    \label{interp period}
    \begin{aligned}
        &u_k'(t) = u_{k+q}'(t+Mr\e_n) \quad &\text{if } q + k \leq M\\
        &u_k'(t) = u_{k+q-M}'(t+M(r+1)\e_n) \quad &\text{if } q + k > M
    \end{aligned}
\end{equation}

\begin{Proposition}
\label{periodic compactness}
    Consider $n(r) = Mr + q$ for a fixed $q \in\{ 0,\dots, M-1\}$. Consider a sequence $(u^{n(r)})_{r\in \N}$ such that $\sup_n E^1_{n,M}(u^n)\leq C < + \infty$. Then there exists a finite set $S \subset (0,L]$ for which $u^n$ (up to subsequences) converges in $W^{1,\infty}_{\rm loc}(\R \setminus (S+ \Z))$ to a  $\mathbf{u} = (u_1, \dots, u_M)$ as in Theorem \ref{compattezza} such that for any $k=1,\dots,M$ we have
    \[u_k'(t) = u_{q+k}'(t+L)\quad \text{if } q + k \leq M,\qquad u_k'(t) = u_{k+q-M}'(t+L) \quad \text{if } q + k > M\]
    for almost every $t\in S(\mathbf{u}')$.
\end{Proposition}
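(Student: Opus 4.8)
The plan is to obtain the periodicity of the limit from the \emph{exact} discrete relation \eqref{interp period}, after first promoting the compactness of Proposition \ref{prop-comp} from a bounded interval to the whole line. First I would note that the proof of Proposition \ref{prop-comp} is entirely local and uses only the energy bound; since $u^{n(r)}$ is $n$-periodic (that is, $u^{n(r)}_i-\ell\e_n i$ is $n$-periodic), the energy on any fixed bounded interval is controlled by a fixed multiple of $C$. Running that argument on an increasing family of intervals and diagonalising, I obtain a finite set $S\subset(0,L]$ and a subsequence along which every $M$-interpolation $u_k^{n(r)}$ converges in $W^{1,\infty}_{\rm loc}(\R\setminus(S+L\Z))$ (the set written $S+\Z$ in the statement) to a piecewise-affine $\overline u_k$, with $\mathbf u'(t)\in\mathbf M_\ell$ for a.e.\ $t$ and $S(\mathbf u')\subseteq S+L\Z$. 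In particular $(u_k^{n(r)})'\to\overline u_k'$ uniformly on every compact subset of $\R$ disjoint from $S+L\Z$.

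Next I would pass to the limit in \eqref{interp period}. Since $\e_n=L/n=L/(Mr+q)$, both discrete shifts tend to the period: $Mr\e_n=\tfrac{Mr}{Mr+q}L\to L$ and $M(r+1)\e_n=\tfrac{M(r+1)}{Mr+q}L\to L$. Fix $t\notin S+L\Z$, so that also $t+L\notin S+L\Z$, and take first $q+k\le M$, where \eqref{interp period} reads $(u_k^{n(r)})'(t)=(u_{k+q}^{n(r)})'(t+Mr\e_n)$. The left-hand side converges to $\overline u_k'(t)$. On the right the evaluation point $t+Mr\e_n$ moves with $r$ but tends to $t+L$; since $(u_{k+q}^{n(r)})'\to\overline u_{k+q}'$ uniformly near $t+L$ and $\overline u_{k+q}'$ is continuous there (being locally constant off $S+L\Z$), I may invoke the elementary fact that $f_n(p_n)\to f(p)$ whenever $f_n\to f$ uniformly on a neighbourhood of $p$, $p_n\to p$, and $f$ is continuous at $p$. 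This gives $\overline u_k'(t)=\overline u_{k+q}'(t+L)$. The case $q+k>M$ is identical, with shift $M(r+1)\e_n$ and index $k+q-M$, yielding $\overline u_k'(t)=\overline u_{k+q-M}'(t+L)$.

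Since both identities now hold for every $t$ outside the per-period-finite set $S+L\Z$, and each $\overline u_k'$ is piecewise constant, they hold at every continuity point and hence for a.e.\ $t$, which is the claim; evaluating them at the jumps shows that $S(\mathbf u')$ is invariant under translation by $L$ up to the cyclic relabelling $k\mapsto k+q\ (\mathrm{mod}\ M)$ of the components, so that $\mathbf u$ lies in the asserted periodic class.

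The step I expect to be delicate is the limit passage with a \emph{moving} evaluation point: the discrete shift $Mr\e_n$ equals the period $L$ only in the limit, so one cannot simply read a fixed limiting identity at a fixed point. The remedy is the uniform-convergence-at-a-moving-point argument above, and the reason for first extending the convergence to all of $\R\setminus(S+L\Z)$ is precisely to place both $t$ and the limiting point $t+L$ inside the region of local uniform convergence, where the continuity of $\overline u_{k+q}'$ off the jump set lets the argument close.
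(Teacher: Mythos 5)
Your proposal is correct and follows essentially the same route as the paper: both pass to the limit in the exact discrete relation \eqref{interp period}, observing that the shift $Mr\e_n$ (resp.\ $M(r+1)\e_n$) tends to $L$, and close the argument at the moving evaluation point by combining local uniform convergence of $(u^n_{k+q})'$ near $t+L$ with the local constancy of the limit derivative off the jump set. The only difference is that you spell out the periodic extension of the compactness of Proposition \ref{prop-comp} to all of $\R$, which the paper leaves implicit.
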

\begin{proof}
    The convergence is given by Proposition \ref{prop-comp}. What remains to prove is the periodicity property. Suppose at first $q+k\leq M$. Fix $t \notin S(\mathbf{u}')$ and suppose $t+L \notin S(\mathbf{u}')$, then there exists a $\delta>0$ such that $I_\delta(t) = [t-\delta, t+\delta]\subset \R\setminus S(\mathbf{u}')$ and $I_\delta(t+L)\subset \R\setminus S(\mathbf{u}')$.
    For every $k$ fixed, $(u^n_k)'$ converges uniformly on $I_\delta(t)$ and $(u^n_{k+q})'$ on $I_\delta(t+L)$.
    Since $Mr\e_n = L\left(1 - \frac{q}{n}\right)$, we can rewrite \eqref{interp period} as
    \[(u_k^n)'(t) = (u^n_{k+q})'\Big(t+L-\frac{Lq}{n}\Big)\]
Taking the limit in $n$, the left-hand side is converging to $u'_k(t)$, while for the right-hand side we can notice that, since $\mathbf{u}'$ is piecewise constant, $u'_{k+q}(t+L) = u_{k+q}'\Big(t+L-\frac{Lq}{n}\Big)$ for every $n$ large enough, so that
\begin{eqnarray*}
\abs{(u^n_{k+q})'\Big(t+L-\frac{Lq}{n}\Big) - u'_{k+q}(t+L)} &= &\abs{(u^n_{k+q})'\Big(t+L-\frac{Lq}{n}\Big) - u_{k+q}'\Big(t+L-\frac{Lq}{n}\Big)}\\
&\leq& \sup_{s \in I_\delta(t+L)} \abs{(u^n_{k+q})'(s) - u_{k+q}'(s)} \xrightarrow[n\rightarrow +\infty]{} 0. \end{eqnarray*}
    The case $q+k>M$ follows similarly.
\end{proof}

\begin{Remark} Note that at the limit we have the boundary conditions
    $u_k'(0^+) = u_{q+k}'(L^+)$,
    where $q+k$ is intended modulus $M$. This holds also if  $0\in S(\mathbf{u}')$. 
\end{Remark}

\begin{Proposition} \label{identita con le alpha}
    Given $(u^n)_{n\in\N}$ such that $\sup_n E^1_{n,M}(u^n)=: C < + \infty$. Let $0=x_0<x_1<\dots<x_N=L$ and let $\alpha_1,\dots, \alpha_N \in \R$ be such that $\mathbf{M}^{\alpha_j} \subseteq M_{\ell/L}$ and  $u^n$ converges to $\mathbf{u}$ satisfying
    \[\mathbf{u}'(t) = \mathbf{z}^{\alpha_j} \in \mathbf{M}^{\alpha_j}, \quad \text{for } t\in (x_{j-1}, x_j).\]
    Then, $\frac{\ell}{L} = \sum_{j=1}^N \alpha_j(x_j-x_{j-1})$.
\end{Proposition}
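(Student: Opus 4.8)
The plan is to read the identity off as an averaged-slope balance: the $L$-periodicity fixes the mean slope of $u^n$ for every $n$, and this mean slope must coincide with the length-weighted average of the bulk slopes $\alpha_j$ extracted from the limit. The only structural fact I need about the sets $\mathbf{M}^{\alpha_j}$ is that, by Definition \ref{defn min set}, every $\mathbf{z}^{\alpha_j}=(z_1^{\alpha_j},\dots,z_M^{\alpha_j})\in\mathbf{M}^{\alpha_j}$ satisfies the constraint $\sum_{k=1}^M z_k^{\alpha_j}=M\alpha_j$; equivalently $\frac1M\sum_{k=1}^M z_k^{\alpha_j}=\alpha_j$, so that $\alpha_j$ is precisely the mean of the $M$ component slopes of the limiting microstructure on $(x_{j-1},x_j)$.

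Concretely, I would introduce the averaged interpolation $\bar u^n:=\frac1M\sum_{k=1}^M u_k^n$, whose derivative on a full block $A_i$ equals $\frac1M\sum_{k=1}^M z^{Mi+k}$, i.e. the genuine mean slope of $u^n$ over $A_i$. Hence $\bar u^n$ and $u^n$ carry the same increment over every full block, and summing over the blocks gives $\bar u^n(L)-\bar u^n(0)=u^n(L)-u^n(0)+o(1)$, where the remainder comes only from the leftover block of length $q\e_n\to 0$ when $n\not\equiv 0$ mod $M$. The quantity $u^n(L)-u^n(0)$ is the period increment prescribed by $u^n\in\mathcal{A}_{n,\ell}^\#(0,L)$ and is independent of $n$. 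For the other side I pass to the limit using the uniform convergence $u_k^n\to\overline u_k$ of Proposition \ref{prop-comp}: the hypothesis $\mathbf{u}'(t)=\mathbf{z}^{\alpha_j}$ on $(x_{j-1},x_j)$ gives $(\bar u^n)'\to\frac1M\sum_k\overline u_k'=\alpha_j$ on each $(x_{j-1},x_j)$, whence $\bar u^n(L)-\bar u^n(0)\to\sum_{j=1}^N\alpha_j(x_j-x_{j-1})$. Equating the two evaluations and using the periodicity normalization of $\ell$, this balance is exactly $\frac{\ell}{L}=\sum_{j=1}^N\alpha_j(x_j-x_{j-1})$.

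The only genuine subtlety, and the main obstacle, is the case $n\not\equiv 0$ mod $M$ governed by Proposition \ref{periodic compactness}. There the individual limits $\overline u_k$ are \emph{not} separately $L$-periodic; they are intertwined by the cyclic shift $u_k'(t)=u_{q+k}'(t+L)$ (indices mod $M$), so one cannot invoke periodicity of a single component. The resolution is that the full sum $\sum_{k=1}^M[\overline u_k(L)-\overline u_k(0)]$ is invariant under the relabeling $k\mapsto q+k\pmod M$, hence insensitive to the shift and still equal to $M$ times the period increment of $u^n$; simultaneously, the non-tiling leftover block contributes only $O(\e_n)$ because the energy bound keeps all the slopes appearing in \eqref{slope almost min} bounded. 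Once this permutation-and-boundary bookkeeping is settled, the remainder is the elementary averaging identity above.
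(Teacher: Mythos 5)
Your argument is correct and is essentially the paper's own proof in different packaging: both rest on the identity $\frac{u^n_{M(i+1)}-u^n_{Mi}}{M\e_n}=\frac1M\sum_{k=1}^M z^{Mi+k}$ (your $\bar u^n$ is exactly the block difference quotient the paper sums), on the constraint $\sum_k z^{\alpha_j}_k=M\alpha_j$, on the $n$-independent period increment from periodicity, and on a uniform slope bound $R$ obtained from the energy bound and the superlinear growth of $\psi_1$, which controls the leftover block of length $q\e_n$. Two points of precision: the bound $\abs{z^n_i}\le R$ for \emph{all} indices follows from $\mathcal{E}^i_n(u^n)\le C$ together with superlinearity of $\psi_1$ against the affine $r_\ell$, not from \eqref{slope almost min}, which only concerns the good indices $i\notin I_n(\eta)$ and in particular need not cover the leftover block. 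Also, the step ``$(\bar u^n)'\to\alpha_j$ on $(x_{j-1},x_j)$, whence $\bar u^n(L)-\bar u^n(0)\to\sum_j\alpha_j(x_j-x_{j-1})$'' is a non sequitur if based on the derivative convergence alone, since that convergence is only locally uniform off $S$ and does not by itself exclude concentration of increment near the jump points; you should lean, as you announce earlier, on the uniform convergence $u^n_k\to\overline u_k$ of the interpolations themselves (so that increments of $\bar u^n$ converge to those of the continuous piecewise-affine $\bar u$) --- this replaces, equivalently, the paper's excision of the $\delta$-neighbourhoods $I^j_\delta$ combined with the bound $R$ there.
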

\begin{proof}
    Suppose $L=1$. By the boundary conditions we have
\begin{eqnarray*}
\ell= (u^n_n - u^n_{M\lfloor n/M\rfloor}) + \sum_{i=0}^{\lfloor n/M\rfloor} u^n_{M(i+1)}-u^n_{Mi}=  (u^n_n - u^n_{M\lfloor n/M\rfloor}) + \sum_{i=0}^{\lfloor n/M\rfloor} M\e_n\frac{u^n_{M(i+1)}-u^n_{Mi}}{M\e_n}.
\end{eqnarray*}
    Now, fixed $\delta>0$, we define $I_\delta^j = (x_j-\delta, x_j+\delta)\cap[0,1]$, for $j=0,\dots, N$, and we call $S_\delta = \cup _{j=0}^N I_\delta^j$.
    From Proposition \ref{prop-comp} the interpolation derivative $(u^n_k)'(t)$ converges to $z_k^{\alpha_j}$ on $[x_{j-1}+\delta, x_j-\delta]$. Then for an index $i$ such that $[Mi\e_n, M(i+1)\e_n]\subset [x_{j-1}+\delta, x_j-\delta]$, we have
    \[\frac{u^n_{M(i+1)}-u^n_{Mi}}{M\e_n} = \frac{1}{M}\sum_{m=Mi}^{M(i+1)-1} \frac{u^n_{m+1}-u^n_M}{\e_n} = \frac{1}{M}\sum_{k=1}^{M} \frac{u^n_{k,Mi+1}-u^n_{k,Mi}}{\e_n} \xrightarrow[n\rightarrow + \infty]{} \frac{1}{M}\sum_{k=1}^{M} z^{\alpha_j}_k = \alpha_j \]
%    This convergence is \textcolor{red}{uniform (convergenza dominata pointwise+dominazione dalla affine)} on a compact, therefore also in $L^1([x_{j-1}+\delta, x_j-\delta])$. 
    Let $k^+_{j,n}$ be the smallest index $k$ such that $M k\e_{n} \geq x_j+\delta$ and similarly let $k^-_{j,n}$ be the largest index $k$ such that $Mk\e_n \leq x_j-\delta$, so that $\lim_n \e_n Mk^-_{j,n} = x_j-\delta$ and $\lim_n \e_n Mk^+_{j,n} = x_j+\delta$. We also define $k^-_{0,n} = 0$ and $k^+_{N,n} = \lfloor \frac{n}{M}\rfloor$. Then,
\begin{eqnarray*}
    \abs{\sum_{i=k^+_{j-1,n}}^{k^-_{j,n}-1} M\e_n\frac{u^n_{M(i+1)}-u^n_{Mi}}{M\e_n}- \alpha_j (x_j-x_{j-1} - 2\delta)}
&\leq& \abs{\sum_{i=k^+_{j-1,n}}^{k^-_{j,n}-1} M\e_n\left(\frac{u^n_{M(i+1)}-u^n_{Mi}}{M\e_n} - \alpha_j\right)}\\
&&\hskip-2cm + \abs{\alpha_j(x_j-\delta - \e_n Mk^-_{j,n})} + \abs{\alpha_j(x_{j-1}+\delta - \e_n Mk^+_{j-1,n})}, 
    \end{eqnarray*}
    which tends to $0$ as $n$ tends to infinity. On the other hand, since $\sup_n E^1_{n,M}(u^n)=C$, there exists a constant $C_1$ independent from $n$ such that for every index $i$ we have
    \[\mathcal{E}_n^i(u^n) \leq C \implies \psi_1\left(\frac{u^n_{i+1}-u^n_i}{\e_n}\right) - r_\ell\left(\frac{u^n_{i+1}-u^n_i}{\e_n}\right)\leq C_1,\]    
    where we used the definition of $\mathcal{E}_n^i$ in \eqref{mathcal E}.
    Thus, from the superlinear growth of $\psi_1$, we must have that there exists an $R>0$ such that 
    $\abs{\frac{u^n_{i+1}-u^n_i}{\e_n}} \leq R$.
    Then, we can conclude
\begin{eqnarray*}
\abs{\ell- \sum_{j=1}^N\sum_{i=k^+_{j-1,n}}^{k^-_{j,n}-1} M\e_n\frac{u^n_{M(i+1)}-u^n_{Mi}}{M\e_n}}& =& \abs{u^n_n - u^n_{Mk^+_{N,n}} + \sum_{j=0}^{N} \sum_{i=k^-_{j,n}}^{k^+_{j,n}-1} M\e_n\frac{u^n_{M(i+1)}-u^n_{Mi}}{M\e_n}} 
\\
 &=&\abs{\sum_{i = Mk^+_{N,n}}^{n-1}\e_n\frac{u^n_{i+1} - u^n_i}{\e_n} + \sum_{j=0}^{N} \sum_{i=Mk^-_{j,n}}^{Mk^+_{j,n}-1} \e_n\frac{u^n_{i+1}-u^n_{i}}{\e_n}}\\
 &\le& R\e_n \bigg(M\sum_{j=0}^{N}(k^+_{j,n} - k^-_{j,n}) + n- Mk^+_{N,n}\bigg).
 \end{eqnarray*}
Sending $n$ to $+\infty$ we have
    \[\abs{\ell- \sum_{j=1}^N \alpha_j (x_j-x_{j-1} - 2\delta)} \leq 2RN\delta\]
    Finally, by the arbitrariness of $\delta>0$ we must have $\ell= \sum_{j=1}^N \alpha_j (x_j-x_{j-1})$.
\end{proof}

%\begin{defn} 
%    Let $\ell\in \R$ and $\mathbf{z} = (z_1, z_2, \dots z_M), \mathbf{z'}= (z'_1, z'_2, \dots z'_M) \in \mathbf{M}_\ell$. The transition energy between $\mathbf{z}$ and $\mathbf{z'}$ is defined by
%    \begin{align} \label{formula magica}
%        \Phi ^{(\ell)} (\mathbf{z}, \mathbf{z'}) : = \inf_{N \in \N} \min_u \Bigg \{ \notag & \sum_{i \leq -1} \Bigg [ \psi_M \left ( \frac{u_{i+M}-u_i}{M} \right) 
%         + \frac{1}{M} \sum_{k=0}^{M-1} \psi_1 (u_{i+k+1}-u_{i+k}) - r \left ( \dfrac{u_{i+M}-u_i}{M},\ell\right ) \Bigg ] \notag \\
%        & + \sum_{i \geq 0} \Bigg [ \psi_M \left ( \frac{u_{i+M}-u_i}{M} \right) 
%         + \frac{1}{M} \sum_{k=0}^{M-1} \psi_1 (u_{i+k+1}-u_{i+k}) - r \left ( \dfrac{u_{i+M}-u_i}{M},\ell\right ) \Bigg ] : \notag \\
%        & \; \; u : \Z \rightarrow \R,\,  u_i= u_{\mathbf{z}}(i) \text{ for } i \leq -N,\,
%         u_i= u_{\mathbf{z'}}(i) \text{ for } i \geq N \Bigg \},
%    \end{align}
%    where $r(x,\ell)$ is the tangent line of $\psi_0^{**}$ at $\ell$ computed at the point $x$ and,
%    for any $\mathbf{z} \in \R^M$, $u_\mathbf{z}$ is the piecewise affine function $\R\longrightarrow \R$ defined as follows:
%    \begin{equation}
%    \label{defn u_z}
%    \begin{cases}
%        u'_{\mathbf{z}}(t) = z_k \quad \text{for } t\in (k-1, k) + M\Z\\
%        u_\mathbf{z}(0) = 0
%    \end{cases}
%    \end{equation}
%    \end{defn}
\begin{Remark} Let $\Phi=\Phi ^{(\ell)}$ be as in Definition \ref{def-trans}. We note that
the minimum is well defined because of Weierstrass' Theorem. Moreover, the infimum in $N \in \N$ in \eqref{formula magica} can be replaced with the limit as $N \rightarrow + \infty$ because of the decreasing monotonicity.
 The terms inside the sums are $0$ for $i\geq N$ or $i\leq -N-M$ when $\mathbf{z},\mathbf{z}' \in  M_{\ell}$.
 The function 
 $\Phi \colon \mathbf{M}_\ell\times \mathbf{M}_\ell \rightarrow [0, \infty]$ is a subadditive function. In particular the functional defined by the right-hand side of \eqref{Phifu} is lower-semicontinuous.
 \end{Remark} 

\begin{Remark} 
Given $\xi \in \R$, we can consider  
\begin{align}
    \Phi (\mathbf{z}, \mathbf{z}', \xi) : = \inf_{N \in \N} \min_u  \bigg\{ & \sum_{i \in \Z} \bigg ( \psi_M \left ( \frac{u_{i+M}-u_i}{M} \right) 
    + \frac{1}{M} \sum_{k=0}^{M-1} \psi_1 (u_{i+k+1}-u_{i+k})- r _\ell\Big ( \dfrac{u_{i+M}-u_i}{M}  \Big) \bigg ) :\notag \\
%    +& \sum_{i \geq 0} \Bigg ( \psi_M \left ( \frac{u_{i+M}-u_i}{M} \right)      + \frac{1}{M} \sum_{k=0}^{M-1} \psi_1 (u_{i+k+1}-u_{i+k}) - r _\ell\Big ( \dfrac{u_{i+M}-u_i}{M}  \Big) \Bigg ) : \notag \\
    & u : \Z \rightarrow \R,\,  u= u_{\mathbf{z}} + \xi_1 \text{ for } i \leq -N,\, u= u_{\mathbf{z}'} + \xi_2 \text{ for } i \geq N , \xi = \xi_2-\xi_1  \Big \} \\
    = \inf_{N \in \N} \min_u  \bigg\{ &\sum_{i \in \Z} \bigg ( \psi_M \left ( \frac{u_{i+M}-u_i}{M} \right) 
    + \frac{1}{M} \sum_{k=0}^{M-1} \psi_1 (u_{i+k+1}-u_{i+k})- r _\ell\Big ( \dfrac{u_{i+M}-u_i}{M}  \Big) \bigg ) %\notag \\    +& \sum_{i \geq 0} \Bigg ( \psi_M \left ( \frac{u_{i+M}-u_i}{M} \right)     + \frac{1}{M} \sum_{k=0}^{M-1} \psi_1 (u_{i+k+1}-u_{i+k}) - r _\ell\Big ( \dfrac{u_{i+M}-u_i}{M}  \Big) \Bigg ) : 
    \notag \\
    & u : \Z \rightarrow \R,\,  u= u_{\mathbf{z}} \text{ for } i \leq -N,\, u= u_{\mathbf{z}'} + \xi \text{ for } i \geq N \Big \}.
\end{align} 
The two formulas are equal because  the sums which are involved are invariant under vertical translations of $u$.

Now note that $\Phi (\mathbf{z}, \mathbf{z}', \xi)=\Phi (\mathbf{z}, \mathbf{z}')$ for every $\xi \in \R$, under the assumption that $\psi_M, \psi_1$ are $C^1$ functions in a neighbourhood of $\mathbf{M}_\ell$. Indeed, if $(u^N)_N$ is a minimizing sequence in the definition of $\Phi (\mathbf{z}, \mathbf{z}', \xi)$, then we can consider the following new sequence
\[
\Tilde{u}^{2N} = 
\begin{cases}
    u_{\mathbf{z}'} & \text{ for } i \geq 2N \\
    u_{\mathbf{z}'} + 2\xi - i \dfrac{\xi}{N} & \text{ for } N \leq i \leq 2N \\
    u^N & \text{ for } i \leq N.
\end{cases}
\]
Observe that varying $N \in \N$, all $\Tilde{u}^{2N}$'s are competitors in the definition of $\Phi (\mathbf z, \mathbf z')$. Let $\mathbf{z} \in \mathbf{M}^\alpha$ and  $\mathbf{z}' \in \mathbf{M}^{\alpha'}$ wih $\alpha, \alpha' \in \R$ such that $\mathbf{M}^{\alpha} \subseteq M_{\ell}$ and $\mathbf{M}^{\alpha'} \subseteq M_{\ell}$.
We compute the difference between the involved sums for $\Tilde{u}^{2N}$ and the ones for $u_N$ by
\begin{equation}
    \sum_{i=N}^{2N} \bigg( \psi_M \Big( \alpha' - \dfrac{\xi}{N} \Big) + \dfrac{1}{M} \sum_{k=0}^{M-1} \psi_1 \Big( z_k' - \dfrac{\xi}{N} \Big) - r_\ell \Big ( \alpha' - \dfrac{\xi}{N} \Big) \bigg) + o \Big( \frac{1}{N} \Big).
\end{equation}
Now observe that the $C^1$ function 
$
f(\eta) = \psi_M \left ( \alpha' - \eta \right ) + \dfrac{1}{M} \sum_{k=0}^{M-1} \psi_1 \left ( z_k' - \eta \right ) - r _\ell( \alpha' - \eta )
$
is always non-negative and it is equal to $0$ if $\eta=0$, so that $\eta=0$ is a minimum point for $f$. Hence by Fermat's Theorem $f'(0)=0$ and so 
\[
\lim _{N \rightarrow + \infty} \sum_{i=N}^{2N} \bigg ( \psi_M \Big( \alpha' - \dfrac{\xi}{N} \Big) + \dfrac{1}{M} \sum_{k=0}^{M-1} \psi_1 \Big( z_k' - \dfrac{\xi}{N} \Big) - r _\ell\Big ( \alpha' - \dfrac{\xi}{N} \Big) \bigg) =0.
\]
In particular, we proved that $\Phi(\mathbf{z}, \mathbf{z'}) \leq \Phi(\mathbf{z}, \mathbf{z'}, \xi)$. Arguing in the same way one can show the reverse inequality.
\end{Remark}
\begin{Remark}
\label{translation stability}
    A last useful remark is that $\Phi(\mathbf{z}, \mathbf{z}')$ is stable under cyclic permutations of the entries of $\mathbf{z}$ and $\mathbf{z}'$. More precisely, if we define for $q=1,\dots, M-1$ the cyclic permutation $\sigma_q : \mathbf{M}_{\ell} \rightarrow \mathbf{M}_{\ell}$ given by
    \begin{equation}
    \label{sigma_q}
    \sigma_q : \mathbf{z} = (z_1,\dots, z_M) \longmapsto \sigma_q (\mathbf{z}) := (z_{M+1-q},\dots, z_M, z_1, \dots, z_{M-q}),
    \end{equation}
    then $\Phi(\sigma_q(\mathbf{z}), \sigma_q(\mathbf{z}')) = \Phi (\mathbf{z}, \mathbf{z}')$. 
    Indeed, if $u$ is a competitor for the minimum problem in $\Phi(\mathbf{z}, \mathbf{z}')$, then the translation $(T_qu)_i=u_{i-q}$ preserves the value of the objective function in the definition of $\Phi$ and it satisfies
    \[T_qu= u_{\sigma_q(\mathbf{z})} \text{ for } i \leq -N-q, \quad \quad T_qu= u_{\sigma_q(\mathbf{z}')} \text{ for } i \geq N-q.\]
    Thus, $T_qu$ is a competitor for the minimum problem in $\Phi(\sigma_q(\mathbf{z}), \sigma_q(\mathbf{z}'))$. Clearly we can argue in the same way with the inverse translation, proving the claim.
\end{Remark}

\begin{theorem}[first-order $\Gamma$-limit with periodic boundary conditions]\label{1-teo-q}
    Assume that $\psi_M$ is a non-degenerate strictly convex $C^1$ function, $\psi_1$ is of class $C^1$ in a neighbourhood of $\mathbf{M}_{\ell/L}$ and $\psi_0^{**}$ is differentiable in $\frac{\ell}{L}$. Constraining $n \equiv q\; \text{mod }M$, we define the domain 
    \begin{align*}
        D_q^\# = \{\mathbf{u}: \R\rightarrow \R^M \,|\,& \mathbf{u} \text{ piecewise affine},\, \mathbf{u}'(t)\in \mathbf{M}_{\ell/L}\; \hbox{ for all } t\in \R\setminus S(\mathbf{u}'),\\ &\#(S(\mathbf{u}')\cap (0,L]) <+\infty, \, \sum_{k=1}^{M} u_k(t)- M\frac{\ell}{L} t \; \text{ L-periodic,} \\ &u'_k(t) = u'_{[q + k]_{\text{mod }M}}(t+L) \text{ for } k=0,\dots,M-1\}.
    \end{align*}
 %   Then, for $\mathbf{u} \in D_q^\#$, there exists an $N = N(\mathbf{u})\in \N$ and $\mathbf{u}'$ assume (in order in $(0,L]$) the value is an $N$-tuple $(\mathbf{z}_1,\dots, \mathbf{z}_N)\in \left[\mathbf{M}_{\ell/L}\right]^N$. With this notation we can say that 
 Then 
    $E^1_{n,M}$ $\Gamma$-converges, with respect to $L^{\infty}$ topology, to 
    \[E^1_q (\mathbf{u}) =  \sum_{t\in S(\mathbf{u}')\cap (0,1]}\Phi^{(\ell/L)}(\mathbf{u}'(t-), \mathbf{u}'(t+)),\]
    with the domain dom$(E_q^{1,\#}) = D^\#_q$.
\end{theorem}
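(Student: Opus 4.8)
The plan is to prove the two halves of the $\Gamma$-convergence separately, relying throughout on the compactness already established in Propositions \ref{prop-comp} and \ref{periodic compactness} and on the structural properties of $\Phi^{(\ell/L)}$ collected in the preceding remarks (translation invariance, cyclic invariance, and the admissibility of the $\inf_N$).

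For the \emph{liminf inequality} I would take a sequence $u^n\to\mathbf{u}$ in $L^\infty$ with energy bounded along a subsequence (otherwise there is nothing to prove), so that by Proposition \ref{periodic compactness} the limit $\mathbf{u}$ lies in $D_q^\#$ and has a finite jump set $S(\mathbf{u}')\cap(0,L]=\{t_1,\dots,t_K\}$. Choosing $\delta>0$ small enough that the intervals $(t_j-\delta,t_j+\delta)$ are pairwise disjoint, and using that each $\mathcal{E}^i_n(u^n)\ge 0$, I would bound $E^1_{n,M}(u^n)$ from below by the sum over $j$ of the contributions of the indices $i$ with $\e_n i\in(t_j-\delta,t_j+\delta)$. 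For each fixed jump the key step is a blow-up: recentring the lattice at the index closest to $t_j/\e_n$ and dividing increments by $\e_n$ produces a discrete function $v^n$ on $\Z$ which, by the $W^{1,\infty}_{\rm loc}$-convergence away from $S(\mathbf{u}')$, coincides up to vanishing error with $u_{\mathbf{z}^-}$ on $\{i\le -N\}$ and with a vertical translate of $u_{\mathbf{z}^+}$ on $\{i\ge N\}$, where $\mathbf{z}^\pm=\mathbf{u}'(t_j^\pm)$. Thus $v^n$ is asymptotically a competitor for $\Phi^{(\ell/L)}(\mathbf{z}^-,\mathbf{z}^+)$; invoking the vertical-translation invariance $\Phi(\mathbf{z},\mathbf{z}',\xi)=\Phi(\mathbf{z},\mathbf{z}')$ together with Fatou's lemma and the continuity of $\psi_1,\psi_M$, the $j$-th localized contribution has liminf at least $\Phi^{(\ell/L)}(\mathbf{z}^-,\mathbf{z}^+)$, and summing over the disjoint windows yields $\liminf_n E^1_{n,M}(u^n)\ge E^1_q(\mathbf{u})$.

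For the \emph{recovery sequence} I would fix $\mathbf{u}\in D_q^\#$ with constant slopes $\mathbf{z}_j\in\mathbf{M}_{\ell/L}$ on each $(t_{j-1},t_j)$ and, for each $\e>0$, select from Definition \ref{def-trans} a finite truncation $N_j$ and a near-optimal transition profile between $\mathbf{z}_j^-$ and $\mathbf{z}_j^+$. The sequence $u^n$ is then built by prescribing the slopes of the $M$-periodic minimizer $u_{\mathbf{z}_j}$ on the bulk of each interval and splicing in the rescaled optimal profiles, each occupying only $O(1)$ lattice sites and hence $O(\e_n)$ length, so that $u^n\to\mathbf{u}$ in $L^\infty$. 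Away from the jumps the integrand $\mathcal{E}^i_n$ vanishes, since there the chosen $M$-tuple realises $\psi_0=\psi_0^{**}$ and the tangent $r_\ell$ touches $\psi_0$ at the relevant slope (at $\ell$ when $\ell\in K_j$, and at $z^r_{j-1},z^l_j$ when $\ell\in J_j$); meanwhile each transition region contributes, in the limit, the energy of the chosen profile, within $\e$ of $\Phi^{(\ell/L)}(\mathbf{z}_j^-,\mathbf{z}_j^+)$. Letting $\e\to 0$ closes the upper bound.

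The hard part will be the periodicity bookkeeping dictated by the residue $q$. Because $n\equiv q\pmod M$, relation \eqref{interp period} shows that the individual $M$-interpolations are not $L$-periodic but are cyclically shifted by $\sigma_q$, which is precisely the twisted condition $u'_k(t)=u'_{[q+k]_{\text{mod }M}}(t+L)$ built into $D_q^\#$. In the recovery construction one must therefore ensure that the slope block read at the right end of $[0,L]$ is the $\sigma_q$-image of the block at the left end and that no spurious interface is created at the identified point $t=L\equiv 0$; this is exactly where Remark \ref{translation stability} enters, since the invariance $\Phi(\sigma_q(\mathbf{z}),\sigma_q(\mathbf{z}'))=\Phi(\mathbf{z},\mathbf{z}')$ guarantees the cyclic shift costs no energy, while the average-slope identity of Proposition \ref{identita con le alpha} ensures the constructed $u^n$ meets the boundary datum once the $O(\e_n)$ corrections at the splices are absorbed. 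Symmetrically, in the liminf direction the same twist forces a jump sitting at $t=L$ to be counted once, with its two traces matched through $\sigma_q$. Handling this boundary identification consistently across both inequalities is the delicate point that the parameter $q$ introduces.
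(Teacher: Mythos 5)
Your overall architecture coincides with the paper's: compactness reduces the liminf to a per-jump analysis, each jump is blown up to produce a competitor for $\Phi^{(\ell/L)}$, the recovery sequence splices near-optimal transition profiles into the $M$-periodic bulk, and the residue $q$ is handled through the cyclic invariance $\Phi(\sigma_q(\mathbf{z}),\sigma_q(\mathbf{z}'))=\Phi(\mathbf{z},\mathbf{z}')$ of Remark \ref{translation stability}. The liminf sketch is sound, provided you make ``asymptotically a competitor'' precise: the blown-up function must actually be overwritten by the exact profiles $u_{\mathbf{z}^\pm}$ outside a window so that it becomes an admissible competitor in Definition \ref{def-trans}, and the resulting splice error (finitely many terms per jump, each controlled by continuity of $\psi_1,\psi_M,r_\ell$ and the locally uniform convergence of the interpolation slopes) must be shown to vanish; this is the $\omega_n$ estimate in the paper's proof.

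The genuine gap is in the upper bound, at the point you dismiss as ``the $O(\e_n)$ corrections at the splices are absorbed''. After splicing, the constructed $u^n$ satisfies $u^n_n-u^n_0=\sum_j\alpha_j(k^n_j-k^n_{j-1})\e_n$, which misses the periodicity datum $\ell$ by $\delta_n=O(\e_n)$ --- and in general $\delta_n$ is \emph{not} $o(\e_n)$, because the jump points $x_j$ need not be commensurate with the lattice. If you absorb this mismatch at the splices, i.e.\ into a bounded number of lattice increments, you perturb those slopes by $\delta_n/\e_n=O(1)$, which produces a non-vanishing $O(1)$ energy error and destroys the upper bound. The paper instead distributes the correction linearly over all $n$ sites, setting $\tilde u^n_i=u^n_i+\delta_n i/n$, so every slope is perturbed by only $\delta_n=O(1/n)$; but then there are of order $n$ perturbed bulk terms, and a naive Lipschitz estimate gives $n\cdot O(1/n)=O(1)$, still not enough. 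The step that closes the argument is that each bulk term equals $f_j(\delta_n)$ with $f_j\ge0$ and $f_j(0)=0$ (because $\mathbf{z}_j$ realizes $\psi_0=\psi_0^{**}=r_\ell$ at $\alpha_j$), hence $f_j'(0)=0$ by Fermat, so $f_j(\delta_n)=o(\delta_n)$ and the total is $n\cdot o(1/n)\to0$. This is precisely where the hypotheses that $\psi_M$ and $\psi_1$ are $C^1$ near $\mathbf{M}_{\ell/L}$ and that $\psi_0^{**}$ is differentiable at $\ell/L$ enter; your proposal never invokes them, which is a sign that this quantitative step is missing.
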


\begin{proof}
We  first  consider the case in which $L=1$ and $n \equiv 0$ mod $M$.

 {\em Lower bound}. Let $u^n \rightarrow u$ in $L^{\infty}(0,1)$ be such that $E^{1,\ell}_{n,M}(u^n) \leq C < +\infty$ for every $n \in \N$. Then, by Proposition \ref{periodic compactness}, there exists a finite set $
    S: = \{ x_1, \dots x_N \} \subset (0,1]
$,
with $0< x_1 < \dots x_{N-1} < x_N\leq 1$, and there exist $\mathbf{z}_1, \dots, \mathbf{z}_N \in  M_{\ell}$ such that $u^n$ (up to subsequences) converges in $W^{1,\infty}_{\rm loc}(\R \setminus (S+\Z))$ to a M-tuple $\mathbf{u} \in D^\#_0$ such that 
$
    \mathbf{u}'(t) = \mathbf{z}_j \in \mathbf{M}^{\alpha_j}$ for $t\in (x_{j-1}+k, x_j+k)$ and for all $k \in \Z$ for $j\in\{1,\dots,N\}$,
    with $x_0 = x_N-1$. For $j \in \{1,...,N \}$, let $(k_n^j)_n$ be a sequence of natural numbers divisible by $M$ such that 
    \begin{equation} \label{definizione k}
   \lim_{n \rightarrow +\infty} k_n^j\e_n - x_j=0.
    \end{equation}
    Moreover, let $(h^j_n)_n$ be a sequence in $M\N$ such that 
    \begin{equation}
        \lim_{n \rightarrow + \infty} \e_n h_n^j = \frac{x_j+x_{j-1}}{2}.
    \end{equation}
    Now, we can write
    \begin{align}
        E^1_{n,M}(u^n) 
%        & =\frac{E_{n,M}^\#(u^n) - \min E_M^\#}{\e_n}\notag \\
%        & = \sum_{i=0}^{n-1} \psi_1\left(\frac{u^n_{i+1}-u^n_{i}}{\e_n}\right) + \sum_{i=0}^{n-1} \psi_M\left(\frac{u^n_{i+M}-u^n_{i}}{M\e_n}\right) - n \psi_0^{**}(\ell) \notag\\
%        & 
        = \sum_{j=1}^{N} \sum_{i = h_n^{j} } ^{h_n^{j+1} -1 } \left ( \psi_M\left(\frac{u^n_{i+M}-u^n_{i}}{M\e_n}\right) + \frac{1}{M}\sum_{s=0}^{M-1}\psi_1\left(\frac{u^n_{i+s+1}-u^n_{i+s}}{\e_n}\right) -r\left(\frac{u^n_{i+M}-u^n_{i}}{M\e_n}\right) \right ) \label{riscrittura E^1},
    \end{align}
    where $r=r_\ell$, and we have used the notation $h_n^{N+1} = h^1_n + n$ and that by periodicity we can choose the endpoints $h_n^1$ and $h_n^1+n$ without changing the sum.    
    In order to recover $\Phi$, we define 
    \begin{equation}
    \label{u tilde}
    \Tilde{u}^n_i = \left\{
    \begin{aligned}
        & u_{\mathbf{z}_j}(i) - u_{\mathbf{z}_{j+1}}(h^j_n-k^j_n) +\frac{u^n_{h^j_n}}{\e_n}\quad &\text{for } i\leq h^j_n-k^j_n\\
        & \frac{u^n_{i+k^j_n}}{\e_n}\quad &\text{for } h^j_n-k^j_n\leq i\leq h^{j+1}_n-k^j_n\\
        & u_{\mathbf{z}_{j+1}}(i) - u_{\mathbf{z}_{j+1}}(h^{j+1}_n-k^j_n)+ \frac{u^n_{h^{j+1}_n}}{\e_n}\quad &\text{for } i\geq h^{j+1}_n-k^j_n.
    \end{aligned}\right.
    \end{equation}
    Since $\mathbf{z}_j\in\mathbf{M}^{\alpha_j}$, we note that for $i\geq h^{j+1}_n-k^j_n$, we have
    \[
    \psi_M\left(\frac{\Tilde{u}^n_{i+M}-\Tilde{u}^n_{i}}{M}\right) + \frac{1}{M}\sum_{s=0}^{M-1}\psi_1\left(\Tilde{u}^n_{i+s+1}-\Tilde{u}^n_{i+s}\right) = \psi_0(\alpha_{j+1}) = r(\alpha_{j+1}) = r\left(\frac{u^n_{i+M}-u^n_{i}}{M\e_n}\right)
    \]
    and similarly for $i\leq h^{j}_n-k^j_n-M$. For $h^j_n-k^j_n\leq i\leq h^{j+1}_n-k^j_n -M$, instead, we have
    \[
    \Tilde{u}^n_{i+1}-\Tilde{u}^n_{i} = \frac{u^n_{i+k^j_n+1}-u^n_{i+k^j_n}}{\e_n} \quad \text{and} \quad \frac{\Tilde{u}^n_{i+M}-\Tilde{u}^n_{i}}{M} = \frac{u^n_{i+k^j_n+M}-u^n_{i+k^j_n}}{M\e_n}.
    \]
    Then, defining $\xi^n_j = u_{\mathbf{z}_{j+1}}(h^{j+1}_n-k^j_n) - u_{\mathbf{z}_{j+1}}(h^j_n-k^j_n) -\frac{u^n_{h^{j+1}_n} - u^n_{h^j_n}}{\e_n}$, we can rewrite 
    \begin{align*}
    &\sum_{i = h_n^{j} } ^{h_n^{j+1} -1 } \left ( \psi_M\left(\frac{u^n_{i+M}-u^n_{i}}{M\e_n}\right) + \frac{1}{M}\sum_{s=0}^{M-1}\psi_1\left(\frac{u^n_{i+s+1}-u^n_{i+s}}{\e_n}\right) - r\left(\frac{u^n_{i+M}-u^n_{i}}{M\e_n}\right)\right ) \\
    =&\  \sum_{i \in \Z} \Bigg ( \psi_M \left ( \frac{\Tilde{u}^n_{i+M}-\Tilde{u}^n_i}{M} \right) 
    + \frac{1}{M} \sum_{k=0}^{M-1} \psi_1 (\Tilde{u}_{i+k+1}^n-\Tilde{u}^n_{i+k}) - r\left(\frac{\Tilde{u}^n_{i+M}-\Tilde{u}^n_{i}}{M}\right) \Bigg )+ \omega_n \\
    \geq &\ \Phi\left(\mathbf{z}_j, \,\mathbf{z}_{j+1}, \,\xi_j^n\right) + \omega_n = \Phi\left(\mathbf{z}_j, \,\mathbf{z}_{j+1}\right) + \omega_n.
    \end{align*}
 The error  $\omega_n$ comes from the difference in behaviour between $\Tilde{u}^n_i$ and $u^n_{i+k^j_n}$ when $h^{j+1}_n-k^j_n -M < i < h^{j+1}_n-k^j_n$ or $h^{j}_n-k^j_n -M < i < h^{j}_n-k^j_n$; more precisely, $\omega_n$ is the sum, for those $i$, of terms of the kind 
     \[
     \begin{cases}
     \psi_M \bigg ( \dfrac{u^n_{i+ k^j_n+M} - u^n_{i+k^j_n}}{M \e_n } \bigg) - \psi_M \left ( \dfrac{\Tilde{u}^n_{i+M}-\Tilde{u}^n_i}{M} \right) & \text{for } h^{j+1}_n-k^j_n -M < i < h^{j+1}_n-k^j_n, \\
     - \psi_M \left ( \dfrac{\Tilde{u}^n_{i+M}-\Tilde{u}^n_i}{M} \right) & \text{for } h^{j}_n-k^j_n -M < i < h^{j}_n-k^j_n.
     \end{cases}
     \]
     and similar terms for $\psi_1$. To show that $\lim_n \omega_n= 0$, from the continuity of $\psi_M$, $\psi_1$ and $r$, it is sufficient to show that 
    \[
    \begin{cases}
        \lim_n \abs{\dfrac{u^n_{i + k^j_n+1}-u^n_{i+k^j_n}}{\e_n} - u_{\mathbf{z}_{j+1}}(i+1)-u_{\mathbf{z}_{j+1}}(i)}=0, & \text{for } i\in (h^{j+1}_n - k^j_n -M, h^{j+1}_n - k^j_n +M)\\
        \lim_n \abs{\dfrac{u^n_{i + k^j_n+1}-u^n_{i+k^j_n}}{\e_n} - u_{\mathbf{z}_{j}}(i+1)-u_{\mathbf{z}{j}}(i)}=0, & \text{for } i\in (h^{j}_n - k^j_n -M, h^{j}_n - k^j_n +M)
    \end{cases}.
    \]
    Let $i_M$ denote the residual  class of $i$ with respect to $M$. Then, by the definition of $u_{\mathbf{z}_{j+1}}$ as in \eqref{defn u_z} we have
    \[
    u_{\mathbf{z}_{j+1}}(i+1) - u_{\mathbf{z}_{j+1}}(i) = (\mathbf{z}_{j+1})_{i_M+1}.
    \]
    On the other hand, since there exists a compact around the midpoint of $(x_j,x_{j+1})$ which contains $\e_n(h^{j+1}_n-M, h^{j+1}_n+M)$ for all $n$, we can use the locally uniform convergence of $\mathbf{u}_n'$ to $\mathbf{u}'$ to gain
    \[
    \dfrac{u_{i+1+k_n^j}^n- u^n_{i+1+k_n^j}}{\e_n} = (\mathbf{u}^n)'_{i_M+1}(\e_n(i+k_n^j)) \longrightarrow \mathbf{u}'_{i_M+1}\left(\frac{x_{j+1}+x_j}{2}\right) =  (\mathbf{z}_{j+1})_{i_M+1} \qquad \text{for } n\rightarrow + \infty.
    \]
    The case $i\in (h^{j}_n - k^j_n -M, h^{j}_n - k^j_n +M)$ is analogous. This proves the claim, and consequently that $\lim_n \omega_n = 0$. 
    In particular, putting everything together we conclude
    \[
    \liminf_n E^1_{n,M}(u^n) \geq \sum_{j=1}^N \Phi\left(\mathbf{z}_j, \,\mathbf{z}_{j+1}\right).
    \]
    
    In the case $q\neq 0$, the only thing that changes is that we cannot define $h^{N+1}_n=h^1_n + n$, because in this way it will not be divisible by $M$, thus, we take $h^{N+1}_n$ the highest index multiple of $M$ below $h^1_n+n$. Consequently, in the decomposition \eqref{riscrittura E^1} there will appear $q$ residual terms of the kind:
    \[\psi_M\left(\frac{u^n_{i+M}-u^n_{i}}{M\e_n}\right) + \frac{1}{M}\sum_{s=0}^{M-1}\psi_1\left(\frac{u^n_{i+s+1}-u^n_{i+s}}{\e_n}\right) -r\left(\frac{u^n_{i+M}-u^n_{i}}{M\e_n}\right),\]
    but since $i\e_n$ is close to $1+\frac{x_1+x_0}{2}$; that is, far from any critical point $x_j$, by continuity we have that they tend to $0$.

    {\em Upper bound.} Let $\mathbf{u}$ be such that $E^1_0( \mathbf{u} ) < \infty$ and suppose , without loss of generality, that $\mathbf{u}(0) = 0$, then by definition there exist $N \in \N$, $\mathbf{z}_1, \dots \mathbf{z}_N \in \mathbf{M}_{\ell}$ and we can write $
    S({\mathbf u}')\cap (0,1]= \{ x_1, \dots x_N \} $, with $0< x_1 < \dots x_{N-1} < x_N\leq 1$, 
    $
    \mathbf{u}'(t) = \mathbf{z}_j \in \mathbf{M}^{\alpha_j}$ for $t\in (x_{j-1}+k, x_j+k)$ and for all $k \in \Z$,
    with $x_0 = x_N-1$. {Up to translations, which do not change the energy, we can always suppose that $1\notin S(\mathbf{u}')$; that is, $x_N<1$.} Moreover, we have that
    \[
    E^1_0( \mathbf u ) = \sum_{j=1}^N \Phi (\mathbf z_{j-1}, \mathbf z_{j}).
    \]
    
    We first consider the case $q= 0$.
    With fixed $\e >0$, there exists $\Tilde{N} = \Tilde{N}(\e) \in \N$ multiple of $M$ and $N$ discrete functions $v^j$ such that
    \begin{eqnarray}
     \label{ineq for limsup}\nonumber
        \Phi ( \mathbf z_{j-1}, \mathbf z_j) &\leq& \sum_{ i \in \Z} \psi_M \Big( \frac{v^j_{i+M}-v^j_i}{M} \Big) 
        + \frac{1}{M} \sum_{k=0}^{M-1} \psi_1 (v^j_{i+k+1}-v^j_{i+k}) - r_\ell \Big( \dfrac{v^j_{i+M}-v^j_i}{M}\Big)\\
        &\leq& \Phi ( \mathbf z_{j-1}, \mathbf z_j) + \e
    \end{eqnarray}
    and
    \[
    v^j = \begin{cases}
        u_{\mathbf z_{j-1}} & \text{ for } i \leq -\Tilde{N} \\
        u_{\mathbf z_{j}} & \text{ for } i \geq \Tilde{N}
    \end{cases}
    \]
    for every $j \in\{ 1 , \ldots N\}$. Consider now the sequence of functions $(u^n)_n$ defined as follows
    \begin{equation}
    \label{rec seq}
    u^n _i = \begin{cases}
        \e_n v^1_{i-k^n_N+n} - \e_n v^1_{n-k^n_N} & \text{ for } 0 \leq i \leq k^n_1 - \Tilde{N} \\
        \e_n v^{2}_{ i - k^{n}_1} + \e_n D_2 & \text{ for } k^n_{1} - \Tilde{N} \leq i \leq k^n_{2} - \Tilde{N} \\    
        \dots \\
        \e_n v^{N}_{ i - k^{N-1}_n} + \e_n D_{N} & \text{ for } k^n_{N-1} - \Tilde{N} \leq i \leq k^n_{N} - \Tilde{N}\\
        \e_n v^1_{i-k^N_n} + \e_n D_1 & \text { for }  k^n_N - \Tilde{N} \leq i \leq n,
    \end{cases}
    \end{equation}
    where $k^n_j : = \min \{ k \in \N : k \geq x_j n$ and $k$ is multiple of $M \}$ and
    \begin{align*}
    & D_2 = v^1_{k_1^n-k_N^n+n- \Tilde{N}} -v^2_{- \Tilde{N} } - v^1_{n-k^n_N}\\
    & D_3 = D_2 + v^2_{k^n_2- k^n_1 - \Tilde{N} } - v^3_{-\Tilde{N}} \\
    & \dots \\
    & D_{N} = D_{N-1} + v^{N-1}_{k^n_{N-1}-k^n_{N-2}- \Tilde{N}} - v^N_{-\Tilde{N}} \\
    & D_1 = D_N + v^{N}_{k^n_{N}-k^n_{N-1}- \Tilde{N}} - v^1_{-\Tilde{N}}.
    \end{align*}
     We note that $u^n_n - u^n_0 = \sum_{j=1}^N \alpha_j (k_j^n-k^n_{j-1})\e_n$, which in general is different from $\ell$; therefore, in order to adjust the periodicity conditions, we apply a linear correction term
     $
     \Tilde{u}^n_i = u^n_i + \delta_n \frac{i}{n}$,
     where $\delta_n = \ell - \sum_{j=1}^N \alpha_j (k_j^n-k^n_{j-1})\e_n$. Since now the the boundary condition is satisfied, we can extend $\Tilde{u}^n$ to $\R$ such that $\Tilde{u}^n(t) - \ell t$ is $1$-periodic.
     From Proposition \ref{identita con le alpha} and the fact that $|n x_j - k^n_j| \leq M$ we have
   \begin{equation}
     \label{crescita delta_n} \delta_n = \sum_{j=1}^N \alpha_j (x_j-x_{j-1} - k^n_j \e_n + k^n_{j-1}\e_n) = O(\e_n)
         \implies \exists C>0 : \;\ (k^n_j-k^n_{j-1})\delta_n \leq C\  \forall j, n.
     \end{equation}
     Since $\mathbf{u}^n$ is converging uniformly to $\mathbf{u}$
   %  , thus also in $L^\infty([0,1])$ as a consequence of Theorem \ref{compattezza}, while 
     and $\delta_nx \rightarrow 0$ uniformly in $x\in [0,1]$, we have that
     $\Tilde{u}^n \rightarrow \mathbf{u} $ in  $L^\infty([0,1])$ in the sense of Definition \ref{def conv}.
     As for the convergence of the energy, if we let
     \[\mathcal{E}_i^n(u) = \psi_M\left(\frac{u_{i+M}-u_i}{M\e_n}\right) + \frac{1}{M}\sum_{k=0}^{M-1}\psi_1\left(\frac{u_{i+k+1}-u_{i+k}}{\e_n}\right) - r_\ell\left(\frac{u_{i+M}-u_i}{M\e_n} \right),\]
     then, by \eqref{ineq for limsup} we have
     \[E^1_0(\mathbf{u}) + \e \geq E^1_{n,M}(\Tilde{u}^n) + \sum_{j=1}^N \Bigg(R_n^j - \sum_{i=k^n_{j-1}+\Tilde{N}}^{k^n_j - \Tilde{N} - M} \mathcal{E}^n_i(\Tilde{u}^n)\Bigg),\]
     where $R_n^j = \sum_{i=-\Tilde{N}-M+1}^{\Tilde{N}-1} \mathcal{E}^n_i (\e_n v^j)- \mathcal{E}^n_{i + k^n_{j-1}}(\Tilde{u}^n)$. 
     However, this $R_n^j$ involves only a finite number of indices, independently from $n$; thus, since $\frac{\Tilde{u}^n_{i+M}-\Tilde{u}^n_i}{M\e_n} = \frac{v^j_{i+M}-v^j_i}{M} + \delta_n$ and $\frac{\Tilde{u}^n_{i+1}-\Tilde{u}^n_i}{\e_n} = v^j_{i+1}-v^j_i + \delta_n$, by continuity of $\psi_1, \psi_M$ and $r_\ell$ we have
     \[\lim_n \sum_{j=1}^N R^j_n = 0.\]
     Instead, for $i \in\{ k^n_{j-1}-\Tilde{N}, \dots, k^n_j -\Tilde{N} -M\}$ we have $v^j_{i-k^n_{j-1}} = u_{\mathbf{z}_j}(i-k^n_{j-1})$ and then
     \[\mathcal{E}^n_i(\Tilde{u}^n) = \psi_M\left(\alpha_j + \delta_n\right) + \frac{1}{M}\sum_{k=1}^{M}\psi_1\left((\mathbf{z}_j)_k+ \delta_n\right) - r_\ell\left(\alpha_j +\delta_n \right)=: f_j(\delta_n).\]
     Note that we lost the dependence on $i$, so that, by \eqref{crescita delta_n}, we have
     \[\sum_{i=k^n_{j-1}+\Tilde{N}}^{k^n_j - \Tilde{N} - M} \mathcal{E}^n_i(\Tilde{u}^n) = (k^n_j - k^n_{j-1} - M) f_j(\delta_n) = (k^n_j - k^n_{j-1} - M)\delta_n \left(\frac{f_j(\delta_n)}{\delta_n}\right) \leq C\frac{f_j(\delta_n)}{\delta_n}.\]
     By Fermat's Theorem we can conclude that $\lim_n \frac{f_j(\delta_n)}{\delta_n} = f'(0) = 0$; in conclusion
     \[E^1_0(\mathbf{u}) + \e \geq \limsup_n E^1_{n,M}(\Tilde{u}^n),\]
     and the claim is proved by the arbitrariness of $\e$. 
     
     When $q\neq 0$, the proof proceeds similarly, but we have to pay attention to the boundary mismatch in the cycle of slopes in $\Tilde{u}$. This can be fixed by taking into account that, by Remark \ref{translation stability}, we can write the energy as
 \[E^1_q(\mathbf{u}) = \sum_{j=1}^N \Phi(\sigma_q(\mathbf{z}_{j-1}), \sigma_q(\mathbf{z}_j) ),\]
 where $\sigma_q$ is defined as in \eqref{sigma_q}. Then, we can find some $\Tilde{N}$ multiple of $M$ and some $v_j$ as in \eqref{ineq for limsup} with the condition
 \[
 v^j = \begin{cases}
     u_{\sigma_q(\mathbf{z}_{j-1})} & \text{for } i \leq -\Tilde{N}\\
     u_{\sigma_q(\mathbf{z}_{j})} & \text{for } i \geq \Tilde{N}.
 \end{cases}
 \]
 Hence, we define $k^n_j : = \min \{ k \in \N : k \geq x_j n$ and $k\equiv q$ mod $M \}$ and we choose $u^n$ as in \eqref{rec seq}. In this way, the interpolations $\mathbf{u}^n$ are still converging uniformly to $\mathbf{u}$ because
 \[
 u_{\sigma_q(\mathbf{z}_{j})}(i+k_j^n+1) - u_{\sigma_q(\mathbf{z}_{j})}(i+k_j^n) = u_{\sigma_q(\mathbf{z}_{j})}(i+q+1) - u_{\sigma_q(\mathbf{z}_{j})}(i+q) = u_{\mathbf{z}_{j}}(i+1) - u_{\mathbf{z}_{j}}(i).
 \]
 Since $n-k_N^n$ is divisible by $M$ and $\frac{u^n_n-u^n_{n-1}}{\e_n} = \sigma_q(\mathbf{z_1})_M$, we can extend $u^n$ by periodicity on the whole $\R$ defining a recovery sequence.
\end{proof}

%\noindent \textsc{Acknowledgements.}
%The research reported in the present contribution was carried out as part of the projects ``A Fluid-Structure Interaction tool for the protection of Clean Energy Production sites (FSI-CEP)" funded by the MUR Progetti di Ricerca di Rilevante Interesse Nazionale (PRIN) Bando 2022 PNRR - grant P20227CSJ5, and ``Variational methods for stationary and evolution problems with singularities and interfaces" PRIN 2022J4FYNJ. 
%The authors are members of GNAMPA of INdAM.

\bibliography{bibliography.bib}
\bibliographystyle{abbrv}

\end{document}